\documentclass[12pt,reqno]{amsart}

 \RequirePackage{geometry,color}
 \geometry{twoside,
  paperwidth=210mm,
  paperheight=297mm,
  textheight=622pt,
  textwidth=468pt,
  centering,
  headheight=50pt,
  headsep=12pt,
  footskip=18pt,
  footnotesep=24pt plus 2pt minus 12pt,
  columnsep=2pc
 }
\usepackage{enumerate}

\usepackage{graphicx}        % standard LaTeX graphics tool
                             % when including figure files
\usepackage{multicol}        % used for the two-column index
\usepackage[bottom]{footmisc}% places footnotes at page bottom

\usepackage{amssymb}  %required for \subsetneqq

\usepackage{verbatim} %required for \begin{comment} \end{comment}
\usepackage{amsxtra}    %required for \sphat\, \spcheck\,
\usepackage{graphics}
\usepackage{epstopdf}
\usepackage{caption}
\usepackage{subcaption}

\usepackage[mathscr]{eucal}
\usepackage{bm}

\allowdisplaybreaks

%%Some Commands used by Hanh%%%%%%%%%
%
\newcommand{\set}[1]{\left\{#1\right\}}
\newcommand{\abs}[1]{\left\vert #1\right\vert}%
\newcommand{\norm}[1]{\left\Vert #1\right\Vert}%
\newcommand{\trinorm}[1]{\left\|\kern-1pt\left|#1\right|\kern-1pt\right\|}%
\newcommand{\To}{\longrightarrow}

\DeclareMathOperator{\supp}{\mathrm{supp}}

\newcommand{\De}{\Delta}

\newcommand{\ve}{\varepsilon}

\newcommand{\si}{\sigma}

\newcommand{\f}{\frac}

\newcommand{\nf}{\infty}
\newcommand{\q}{\quad}

\newcommand{\li}{L^\infty}
\newcommand{\wh}{\widehat}
\newcommand{\R}{\mathbb R}

%%End commands inputed by Hanh%%%%%%%

\theoremstyle{definition}
\newtheorem{defn}{Definition}[section]

\theoremstyle{theorem}
\newtheorem{thm}[defn]{Theorem}

\newtheorem{lem}[defn]{Lemma}
\theoremstyle{remark}

\newcommand{\beq}{\begin{equation}}
\newcommand{\eeq}{\end{equation}}

\begin{document}

\title[The H\"ormander Multiplier Theorem, III]{The H\"ormander Multiplier Theorem, III: The complete bilinear case via interpolation}

\author[Grafakos]{Loukas Grafakos}
\address{Department of Mathematics,
University of Missouri,
Columbia, MO 65211, USA}
\email{grafakosl@missouri.edu}

\author[Nguyen]{Hanh Van Nguyen}
\address{Department of Mathematics,
The University of Alabama, 
Tuscaloosa, AL 35487, USA}
\email{hvnguyen@ua.edu}
\thanks{2010 MSC: 42B15, 42B30. Keywords:  Multilinear operator, Multiplier operator, Interpolation}
\thanks{The first author would like to thank the Simons Foundation.}

\begin{abstract}
We develop a special multilinear complex interpolation theorem that allows us to 
prove an optimal version of the bilinear H\"ormander multiplier theorem concerning 
symbols that lie in the Sobolev space $L^r_s(\mathbb R^{2n})$, $2\le r<\infty$, $rs>2n$, 
uniformly over all annuli. More precisely, given a smoothness index $s$,
we find the largest open set of indices $(1/p_1,1/p_2 )$ for which we have boundedness for the associated
bilinear multiplier operator from $L^{p_1}(\mathbb R^{ n})\times L^{p_2} (\mathbb R^{ n})$ to $ L^p(\mathbb R^{ n})$
when $1/p=1/p_1+1/p_2$, $1<p_1,p_2<\infty$.
\end{abstract}

\maketitle

%\tableofcontents

\section{Introduction}
Multipliers are linear operators of the form
$$
T_\si(f)(x) = \int_{\mathbb R^n} \wh{f}(\xi) \si(\xi) e^{2\pi i x\cdot \xi}d\xi\, ,
$$
where $f$ is a Schwartz function on $\mathbb R^n$ and
$\wh{f}(\xi) = \int_{\mathbb R^n} f(x)   e^{-2\pi i x\cdot \xi}dx$ is its Fourier transform.

Let $\Psi$ be a Schwartz function whose Fourier transform is supported in the annulus of the form
$\{\xi: 1/2< |\xi|< 2\}$ which satisfies $\sum_{j\in \mathbb Z} \wh{\Psi}(2^{-j}\xi)=1$ for all $\xi\neq 0$.
We denote by  $\De$   the Laplacian and by $(I-\De)^{s/2} $   the operator given on the Fourier transform
by multiplication by
$(1+4\pi^2 |\xi|^2)^{s/2}$; also  for $s>0$, and
we denote by $L^r_s$ the Sobolev  space of all functions $h$ on $\mathbb R^n$
with norm
$
\|h\|_{L^r_\gamma}:=\|(I-\Delta)^{s/2} h \|_{L^r} <\nf.
$
Extending an earlier result of Mikhlin~\cite{Mihl}, 
the optimal version of the H\"ormander  multiplier theorem  says that if 
\beq\label{1H}
\sup_{k\in \mathbb Z} \big\|\wh{\Psi}\si (2^k \cdot)\big\|_{L^{r}_s} <\nf
 \eeq
and
\beq\label{1c}
\Big| \f 1p -\f 12 \Big| <\f sn\, ,
 \eeq
 then $T_\si$ is bounded from $L^p(\mathbb R^n)$ to itself for $1<p<\infty$. 
 H\"ormander's~\cite{Hoe}
  original version of this theorem  stated  boundedness in the entire
 interval $1<p<\nf$ provided $s> n/2$. A   restriction on the indices 
first appeared in Calder\'on and Torchinsky~\cite{CT77}, while condition \eqref{1c}
    appeared in  \cite{GHH2016I}; this condition is sharp as 
examples are given in \cite{GHH2016I} indicating that the theorem   fails in general when 
 $\big| \f 1p -\f 12 \big| > \f sn$. Moreover, recently Slav\'\i kov\'a~\cite{slav} provided an 
 example showing that boundedness may also fail even on the critical 
 line $\big| \f 1p -\f 12 \big| = \f sn$.

 In this paper we provide bilinear analogues of these results.    The study of the 
 H\"ormander multiplier theorem in the multilinear setting was 
 initiated by Tomita~\cite{Tomit10} and was further studied by  Fujita,
 Grafakos, Miyachi,  Nguyen, Si, Tomita (see
\cite{FT12},   ~\cite{GMNT16}  ~\cite{GS12},  \cite{GMT13}, ~\cite{MT13}, ~\cite{MT14}) 
among others. 
 For a given function $\si$ on $\mathbb R^{2n}$ we define a bilinear operator
 $$
 T_\si(f_1,f_2)(x) = \int_{\mathbb R^n} \int_{\mathbb R^n}\wh{f_1}(\xi_1)\wh{f_2}(\xi_2) \si(\xi_1,\xi_2)
  e^{2\pi i  x\cdot ( \xi_1+\xi_2)}d\xi_1d\xi_2
 $$
originally defined on pairs of $\mathscr C_0^\infty$ functions $f_1,f_2$ on $\mathbb R^n$.
We fix a Schwartz function $\Psi$ on $\mathbb R^{2n}$ whose Fourier transform is supported in the annulus
$1/2\le |( \xi_1,\xi_2) |\le 2$ and satisfies
$$
\sum_{j\in \mathbb Z} \wh{\Psi}(2^{-j} ( \xi_1,\xi_2 ))= 1,\q\q    ( \xi_1,\xi_2) \neq 0.
$$

The following theorem is the main result of this paper:

\begin{thm}\label{main}
Let $2\le r<\infty$,  $s>\f{2n}{r}$,
  $1<p_1,p_2\le \infty$ and let $1/p=1/p_1+1/p_2>0$.   \\
(a) Let $n/2<s\le n$. Suppose  that
\begin{equation}\label{0098N}
%\Big|\f{1}{p_1}-\f12 \Big| <\f sn-\f12\, , \q \Big|\f{1}{p_2}-\f12 \Big| <\f sn-\f12 \, , \q \Big|\f{1}{p }-\f12 \Big| <\f sn-\f12 \, .
\frac{1}{p_1}<\frac{s}{n},\, 
\frac{1}{p_2}<\frac{s}{n},\, 
1-\frac{s}{n}<\frac{1}{p}<\frac{s}{n}+\frac{1}{2} .
\end{equation}
Then for all  $\mathscr C_0^\infty(\mathbb R^n)$  functions $f_1,f_2$ we have
\begin{equation}\label{0098}
  \|T_{\sigma}(f_1,f_2)\|_{L^p(\mathbb{R}^n)}\le C\sup_{j\in\mathbb{Z}}
  \|\sigma(2^j\cdot)\widehat{\Psi}\|_{L^r_s(\mathbb{R}^{2n})}\|f_1\|_{L^{p_1}(\mathbb{R}^n)}\|f_2\|_{L^{p_2}(\mathbb{R}^n)}.
\end{equation}
Moreover, if \eqref{0098} holds for all $f_1,f_2 \in \mathscr C_0^\infty$ and all $\si$ 
satisfying \eqref{1H},
then we must necessarily have %\eqref{0098N} with strict inequalities being replaced by $\le$.
\begin{equation}\label{0098LE}
%\Big|\f{1}{p_1}-\f12 \Big| \le \f sn-\f12\, , \q \Big|\f{1}{p_2}-\f12 \Big| \le \f sn-\f12 \, , \q \Big|\f{1}{p }-\f12 \Big| \le \f sn-\f12 \, .
\frac{1}{p_1}\le \frac{s}{n},\, 
\frac{1}{p_2}\le \frac{s}{n},\, 
1-\frac{s}{n}\le \frac{1}{p}\le \frac{s}{n}+\frac{1}{2} .
\end{equation}
(b) Let $n <s\le 3n/2$ and satisfy
\begin{equation}\label{0099N}
 \f{1}{p }  <\f sn+\f12\, .
\end{equation}
Then   \eqref{0098}  holds.
Moreover, if \eqref{0098} holds for all $f_1,f_2 \in \mathscr C_0^\infty$ and all $\si$
satisfying \eqref{1H},
 then we must necessarily have
\begin{equation}\label{0099LE}
 \f{1}{p } \le \f sn+\f12\, .
\end{equation}
(c) If $s>\frac{3n}{2}$ then \eqref{0098} holds for all $1<p_1,p_2<\infty$ and $\frac12 < p<\infty$.
\end{thm}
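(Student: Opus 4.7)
The plan is to derive Theorem~\ref{main} by multilinear complex interpolation from a small collection of endpoint estimates at the vertices of the admissible regions, combined with explicit counterexamples for the sharpness portion. Part (c) is essentially the multilinear H\"ormander theorem of Tomita, as extended to $r\neq 2$ by Miyachi--Tomita and Grafakos--Miyachi--Tomita: once $s>3n/2$ and $rs>2n$, the smoothness is already high enough to cover the full admissible range $1<p_1,p_2<\infty$, $1/2<p<\infty$ by a direct argument. The real content lies in parts (a) and (b), where the lower smoothness carves out the restricted regions \eqref{0098N}, \eqref{0099N}.

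First I would identify the finitely many vertices of the closed hexagon in part (a): the intersections in the $(1/p_1,1/p_2)$ square $(0,s/n)\times(0,s/n)$ of the diagonal lines $1/p=1-s/n$ and $1/p=s/n+1/2$ with the lines $1/p_i=s/n$ and the coordinate axes. The region in part (b) is a triangle cut out by the single diagonal $1/p=s/n+1/2$. At each vertex I would establish an $L^{p_1}\times L^{p_2}\to L^p$ estimate with the H\"ormander norm: at vertices lying on $1/p_i=s/n$ one can freeze one argument and invoke the sharp linear H\"ormander theorem proved in the first paper of this series; at the vertices on the upper diagonal $1/p=s/n+1/2$ one needs a direct bilinear Littlewood--Paley argument that exploits $rs>2n$ via the embedding $L^r_s\subset L^\infty$. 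Once these vertex estimates are in place, bilinear Calder\'on complex interpolation, realized along an analytic family of symbols $\si_z=\si\cdot m_z$ with $m_z$ a fractional-power Fourier multiplier holomorphic in $z$, yields the interior of the region in a single step and simultaneously handles $(s,r)$ as interpolation parameters, subject to $rs>2n$. For the necessity portion I would test against three families of counterexamples: tensor-product symbols $\si(\xi_1,\xi_2)=\si_1(\xi_1)\si_2(\xi_2)$ that reduce matters to the linear theory and force $1/p_i\le s/n$; oscillatory symbols of the form $\eta(\xi_1,\xi_2)|\xi_1+\xi_2|^{-a}e^{i|\xi_1+\xi_2|^b}$ concentrated near the resonant set $\xi_1+\xi_2=0$, which force $1/p\le s/n+1/2$; and symbols obtained by duality/modulation that force $1/p\ge 1-s/n$ in the hexagonal regime.

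The principal technical obstacle will be the endpoint estimate on the upper diagonal $1/p=s/n+1/2$ in the regime $n/2<s\le n$ of part (a). Here the smoothness of $\si$ is too low for a direct $L^2$-based Tomita-type argument to close, and the resonant set $\xi_1+\xi_2=0$ is not excluded by the support of $\wh\Psi$; controlling the contribution of frequencies near that set requires a Whitney-style decomposition of each dyadic annulus with respect to the diagonal, matched with a careful use of the $L^r_s$ norm through the embedding $L^r_s\subset L^\infty$ furnished by $rs>2n$. Embedding these vertex estimates into a Calder\'on scheme in which $s$ itself varies, without losing the sharp constants needed for \eqref{0098LE}--\eqref{0099LE}, is where the greatest care will be needed.
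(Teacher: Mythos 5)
There is a genuine gap, and it sits exactly where you locate the ``principal technical obstacle.'' Your scheme requires $L^{p_1}\times L^{p_2}\to L^p$ estimates \emph{at} the vertices of the closed region, i.e.\ at points lying on the lines $1/p_1=s/n$, $1/p_2=s/n$ and $1/p=s/n+1/2$, with smoothness exactly $s$. No such boundary estimates are known, they are quite possibly false, and the theorem does not claim them: sufficiency is asserted only on the open set \eqref{0098N}, while the closed conditions \eqref{0098LE} are merely necessary. Moreover, the two devices you propose for the vertices do not work. Freezing $f_2$ does not reduce matters to the linear H\"ormander theorem: for fixed $f_2$ the map $f_1\mapsto T_\sigma(f_1,f_2)$ has ``symbol'' $\int_{\mathbb R^n}\widehat{f_2}(\xi_2)\,\sigma(\xi_1,\xi_2)\,e^{2\pi i x\cdot\xi_2}\,d\xi_2$, which depends on $x$, so it is not a Fourier multiplier operator, and its linear H\"ormander norm is not controlled by $\sup_j\|\sigma(2^j\cdot)\widehat\Psi\|_{L^r_s}$; besides, the linear theorem itself needs the strict inequality $|1/p-1/2|<s/n$, so even that idealized reduction would miss the vertex. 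As for the diagonal $1/p=s/n+1/2$ in the regime $n/2<s\le n$, a ``direct bilinear Littlewood--Paley argument'' exploiting only $L^r_s\subset L^\infty$ is exactly what is not available at this level of smoothness; nothing in your sketch (Whitney decomposition near $\xi_1+\xi_2=0$) closes this, and the paper never proves any such endpoint.

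The paper's route avoids all boundary estimates. It exhausts the open region by hexagons $\Gamma_\ve$ whose vertices lie strictly inside, and obtains the estimate at those interior vertices by interpolating between results that are already known: the local $L^2$ estimate $L^2\times L^2\to L^1$ (and its dual $L^2\times L^\infty\to L^2$) valid for every $s>n/2$ with $r=4$ from \cite{GHH2016II}, and the product-type Sobolev estimates of \cite{MT13}, \cite{GMNT16}, which give $L^{p_1}\times L^{p_2}\to L^p$ for $1/p_1+1/p_2\le 3/2$ once $s>n$, together with the Hardy-space endpoints $H^1\times L^2\to L^{2/3}$ ($s>n$) and $H^1\times H^1\to L^{1/2}$ ($s>3n/2$) used in part (b). The engine is the paper's new interpolation theorem (Theorem~\ref{MulInterp}), in which the symbol is rebuilt dyadically as $\sigma_z$ with $|\varphi_j|=|(I-\Delta)^{s/2}[\sigma(2^j\cdot)\widehat\Psi]|$ raised to complex powers, so that \emph{both} the integrability exponent $r$ and the smoothness $s$ vary along the analytic family; your proposed family $\sigma_z=\sigma\, m_z$ with a fractional-power multiplier $m_z$ moves only the smoothness and cannot interpolate between the $r=4$ hypothesis at the local $L^2$ point and the $r=2$ hypotheses of the Miyachi--Tomita inputs. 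Finally, the necessity of \eqref{0098LE} and \eqref{0099LE} is not reproved in this paper: it is quoted from Theorems 2 and 3 of \cite{GHH2016II} plus a duality observation, so your counterexample program, while reasonable in outline, is extra work the paper does not carry out and your sketch of it supplies no details that could substitute for those references.
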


This theorem uses two main tools: First, 
 the optimal $n/2$-derivative result  in the local $L^2$-case contained
 in  \cite{GHH2016II}
  and a special type of multilinear interpolation suitable for the purposes of this problem (see Theorem \ref{MulInterp} below).
Figure~\ref{Fig01} (Section 4), plotted on a slanted $(1/p_1,1/p_2)$ plane,  shows the regions of boundedness for $T_\si$
in the two cases $n/2<s\le n$ and $n <s\le 3n/2$.
Note also that in the former case, the condition $1-\frac{s}{n}<\frac{1}{p}$ is only needed when $p>2$.

Finally, we mention that the necessity  of   conditions 
\eqref{0098N}, \eqref{0098LE}, and \eqref{0099LE} in Theorem~\ref{main} 
  are   consequences of Theorems 2 and 3 in \cite{GHH2016II}; these say that if boundedness 
  holds, then we must necessarily have 
\[
\frac{1}{p_1}\le \frac{s}{n},\quad
 \frac{1}{p_2}\le \frac{s}{n},\quad
 \frac{1}{p}\le \frac{s}{n} +\frac{1}{2}.
\]
Also, if $T_{\sigma}$ maps $L^{p_1}\times L^{p_2}$ to $ L^p$ and $p>2$,
then   duality implies that  $T_{\sigma}$ maps $L^{p'}\times L^{p_2}$ to $L^{p_1'}$.
Now $p'$ plays the role of $p_1$ and so constraint $\f 1 {p_1} \le \f s n$ becomes $1-\frac{s}{n}\le \frac{1}{p}$.
This proves \eqref{0098LE}. So the main contribution of this work is the sufficiency of the conditions in
\eqref{0098N} and \eqref{0099N}.

\section{Preliminary material for interpolation}

In this section we briefly discuss three lemmas needed in our interpolation.
\begin{lem}\label{lem:016}
  Let $0< p_0<p<p_1<\infty$ be related as in $1/p=(1-\theta)/p_0+\theta/p_1$ for some $\theta \in (0,1)$.
  Given $f\in {\mathscr C}_0^\nf(\mathbb R^n)$ and   $\ve>0,$ there exist
  smooth functions $h_j^\ve$, $j=1,\dots, N_\ve$, supported in  cubes   on $\mathbb R^n$ with pairwise disjoint interiors,
  and nonzero complex constants $c_j^\ve$ such that  the functions
\begin{equation}\label{EQ-Fz}
  f^{z,\ve}  =  \sum_{j=1}^{N_\ve} |c_j^\ve|^{\frac p{p_0} (1-z) +   \frac p{p_1}  z}  \, h_j^\ve
\end{equation}
satisfy
$$
 \big\|{f^{\theta,\ve}-f}\big\|_{L^2}+ \big\|{f^{\theta,\ve}-f}\big\|_{L^{p_0}}^{\min(1,p_0)}+ \big\|{f^{\theta,\ve}-f}\big\|_{L^{p_1}} ^{\min(1,p_1)}<  \ve
 $$
and
$$
  \|f^{it,\ve}\|_{L^{p_0}}^{p_0} \le   \|f \|_{L^p}^p +\ve'  \, , \q
  \|f^{1+it,\ve}\|_{L^{p_1}}^{p_1} \le     \|f \|_{L^p}^p  +\ve'\, ,
$$
where $\ve'$ depends on $\ve,p_0,p_1,p, \|f\|_{L^p}$ and tends to zero as $\ve\to 0$.
\end{lem}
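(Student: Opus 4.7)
The plan is to build $f^{z,\varepsilon}$ from a smoothed simple-function approximation of $f$, encoding the $L^p$-mass of $f$ inside the constants $c_j^\varepsilon$. The defining relation $\frac{1}{p}=\frac{1-\theta}{p_0}+\frac{\theta}{p_1}$ gives $\frac{p}{p_0}(1-\theta)+\frac{p}{p_1}\theta=1$, so at $z=\theta$ the exponent is $1$ and $f^{\theta,\varepsilon}=\sum_j|c_j^\varepsilon|\,h_j^\varepsilon$ is a plain smoothed simple function. On the vertical line $z=it$ (resp.\ $z=1+it$) the exponent has real part $p/p_0$ (resp.\ $p/p_1$) and an imaginary oscillatory part of modulus one; because the $h_j^\varepsilon$ will be arranged to have pairwise disjoint supports, we get $|f^{it,\varepsilon}(x)|=\sum_j|c_j^\varepsilon|^{p/p_0}|h_j^\varepsilon(x)|$ independently of $t$, and analogously at $z=1+it$. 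This decoupling from $t$ is what makes the boundary norm estimates possible.

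The construction itself proceeds in two refinement steps. First, approximate $f$ in $L^2\cap L^{p_0}\cap L^{p_1}\cap L^p$ by a simple function $g_\varepsilon=\sum_{j=1}^{N_\varepsilon}b_j\chi_{Q_j}$ with $b_j\in\mathbb C\setminus\{0\}$ and $\{Q_j\}$ a finite family of cubes with pairwise disjoint interiors; pick the $b_j$ as sample values of $f$ on a fine partition so that $\sum_j|b_j|^p|Q_j|$ is a Riemann sum for $\int|f|^p$ not exceeding $\|f\|_{L^p}^p+\varepsilon'$. Second, smooth each characteristic function to $\tilde\chi_j\in\mathscr C_0^\infty$, supported in $Q_j$ with $0\le\tilde\chi_j\le 1$ and $\tilde\chi_j\equiv 1$ on a slightly shrunken concentric subcube $Q_j^\flat\subset Q_j$, so that $\int\tilde\chi_j^{\,q}\le|Q_j|$ for every $q>0$. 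Now set
$$
  c_j^\varepsilon=|b_j|,\qquad h_j^\varepsilon=\tfrac{b_j}{|b_j|}\,\tilde\chi_j.
$$
By the remark above, $f^{\theta,\varepsilon}=\sum_j b_j\,\tilde\chi_j$ differs from $g_\varepsilon$ only on $\bigcup_j(Q_j\setminus Q_j^\flat)$, and
$$
  \|f^{it,\varepsilon}\|_{L^{p_0}}^{p_0}=\sum_j|b_j|^p\int\tilde\chi_j^{\,p_0}\le\sum_j|b_j|^p|Q_j|\le\|f\|_{L^p}^p+\varepsilon',
$$
with the identical estimate for $\|f^{1+it,\varepsilon}\|_{L^{p_1}}^{p_1}$.

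The main obstacle will be coordinating the two smallness requirements---the overshoot $\varepsilon'$ in the Riemann sum and the simultaneous smallness of the three approximation quasi-norms at $z=\theta$---while respecting the failure of the triangle inequality for $L^{p_0}$ when $p_0<1$. This is precisely why the lemma uses the exponent $\min(1,p_q)$: the subadditive functional $\|\cdot\|_{L^{p_q}}^{p_q}$ must replace the norm in the quasi-Banach regime. Once the mesh of the partition and the measure of the annuli $Q_j\setminus Q_j^\flat$ are taken small enough in a single refinement pass, using the uniform continuity of $f\in\mathscr C_0^\infty$ together with its compact support, all four requirements can be met simultaneously and the lemma follows.
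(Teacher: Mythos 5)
Your construction is correct and is essentially the standard one behind this lemma (which the paper only quotes from \cite{GHH2016I}): approximate $f$ by a simple function over cubes with disjoint interiors, smooth the characteristic functions, and raise $|c_j^\ve|$ to the affine power in $z$ so that the exponent equals $1$ at $z=\theta$ while the disjoint supports decouple the boundary-line norms into the Riemann sum $\sum_j |b_j|^p|Q_j|\le \|f\|_{L^p}^p+\ve'$. The key identities you verify (exponent $\tfrac p{p_0}(1-\theta)+\tfrac p{p_1}\theta=1$, modulus $|c_j^\ve|^{p/p_0}$ on $\Re z=0$ and $|c_j^\ve|^{p/p_1}$ on $\Re z=1$, and the $\min(1,p_k)$-power subadditivity in the quasi-Banach range) are exactly what the cited argument uses, so no gap remains beyond the routine mesh/annulus bookkeeping you indicate.
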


\begin{proof}
%We only prove the lemma when $p\ge 1$ as the other case is similar.
Given $f\in {\mathscr C}_0^\nf(\mathbb R^n)$  and   $ \ve>0$, by   uniform continuity
there are $N_\ve$   cubes $Q_j^\ve$ (with disjoint interiors) and nonnegative  constants $c_j^\ve $ such that
$$
\Big\| f - \sum_{j=1}^{N_\ve} c_j^\ve \chi_{Q_j^\ve} \Big\|_{L^2} +
\Big\| f - \sum_{j=1}^{N_\ve} c_j^\ve \chi_{Q_j^\ve} \Big\|_{L^{p_0}} ^{\min(1,p_0)}   
+\Big\| f - \sum_{j=1}^{N_\ve} c_j^\ve \chi_{Q_j^\ve} \Big\|_{L^{p_1}} ^{\min(1,p_1)}  <\ve    \, .  %^{\max(1/p,1)}
$$

 Find nonnegative smooth functions   $g_j^\ve\le \chi_{Q_j^\ve}$ such that
$$
\Big\| \sum_{j=1}^{N_\ve} c_j^\ve (  g_j^\ve-    \chi_{Q_j^\ve}   )  \Big\|_{L^2} +
\Big\|  \sum_{j=1}^{N_\ve}  c_j^\ve ( g_j^\ve-    \chi_{Q_j^\ve}   ) \Big\|_{L^{p_0}} ^{\min(1,p_0)}+
\Big\|  \sum_{j=1}^{N_\ve}  c_j^\ve ( g_j^\ve-    \chi_{Q_j^\ve}   ) \Big\|_{L^{p_1}}^{\min(1,p_1)} <\ve   
$$
and
$$
\bigg( \sum_{j=1}^{N_\ve} |c_j^\ve|^p \| g_j^\ve - \chi_{Q_j^\ve}\|_{L^{p_0}}^{p_0} \bigg)^{\f 1{p_0}}+
\bigg( \sum_{j=1}^{N_\ve} |c_j^\ve|^p \| g_j^\ve - \chi_{Q_j^\ve}\|_{L^{p_1}}^{p_1} \bigg)^{\f 1{p_1}} <\ve\, . 
$$
Let $\phi_j^\ve$ be the argument of the complex number $c_j^\ve$. 
Set $h_j^\ve = e^{i\phi_j^\ve} g_j^\ve$ and
notice that 
$ f_\theta^\ve = \sum_{j=1}^{N_\ve} |c_j^\ve| h_j^\ve =  \sum_{j=1}^{N_\ve}  c_j^\ve  g_j^\ve $  satisfies 
$$
 \big\|{f_\theta^\ve-f}\big\|_{L^2}+ \big\|{f_\theta^\ve-f}\big\|_{L^{p_0}}^{\min(1,p_0)}+ \big\|{f_\theta^\ve-f}\big\|_{L^{p_1}} ^{\min(1,p_1)}<  \ve .
 $$
Moreover, the choice of $g_j^\ve$   implies that
$$
\| f_{it} \|_{L^{p_0}} \le \big( B^{\min(1,p_0)} +\ve ^{\min(1,p_0)} \big)^{\f{1}{\min(1,p_0)} }, 
$$
 where 
 $$
 B=   \Big\| \sum_{j=1}^{N_\ve} c_j^\ve \chi_{Q_j^\ve} \Big\|_{L^p}^{\f{p}{p_0}} \le 
\Big( \big(\ve^{\min(1,p )} +\|f\|_{L^p}^{\min(1,p )}  \big)^{ \f{1}{\min(1,p )} }  \Big)^{\f{p}{p_0}}.
 $$
 An analogous estimate holds for   $f_{1+it}$. 
 Given $a,c>0$ and $\ve>0$ set $\ve'=\ve'(\ve,a,c)= (\ve^a+c^a)^{1/a}-c$. Then $ (\ve^a+c^a)^{1/a} \le \ve'+c$ and $\ve'\to 0 $ as $\ve\to 0$. Then for a suitable $\ve'$ that only depends on 
 $\ve, p,p_0,p_1, \|f\|_{L^p}$, the preceding estimates give:
  $ \|{f_{it}^\ve}\|_{L^{p_0}}^{p_0} \le   \|f \|_{L^p}^p +\ve'$ and $ \|{f_{1+it}^\ve}\|_{L^{p_1}}^{p_1} \le   \|f \|_{L^p}^p +\ve'$, as claimed. 
\end{proof}

\begin{lem}\label{lem:analint}
For $z$ in the strip $a<\Re(z)<b$ and $x\in \mathbb R^n$,
let   $H(z,x)$ be analytic in $z$ and  smooth in $x$ and assume  that it satisfies
  $$
  \abs{H(z,x)}+\abs{\f{dH}{dz} (z,x)}\le H_*(x),\quad\forall a<\Re(z)<b,
  $$
 where $H_*$ is  a measurable function on $\mathbb R^n$.
Let $f$ be a  complex-valued smooth function on $\mathbb R^n$ such that
  $$
  \int_{\mathbb R^n}\max\set{\abs{f(x)}^a,\abs{f(x)}^b}\Big\{ 1+ \abs{\log(\abs{f(x)})} \Big\} {H_*(x)}\, dx <\infty .
  $$
  Then the function
  $$
  G(z) = \int_{\mathbb R^n}\abs{f(x)}^ze^{i \textup{Arg } f(x)}H(z,x)\,dx
  $$
  is analytic on the strip $a<\Re(z)<b$ and continuous up to its boundary.
\end{lem}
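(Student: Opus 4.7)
The plan is to establish analyticity of $G$ on the open strip $a<\Re z<b$ by differentiating under the integral sign, and continuity on the closed strip by dominated convergence. For any $z$ in the closed strip, the integrand defining $G(z)$ is bounded in absolute value by $\max(|f(x)|^a,|f(x)|^b)H_*(x)$, using the elementary bound $t^{\Re z}\le \max(t^a,t^b)$ for $t\ge 0$. This majorant is integrable by hypothesis (even ignoring the logarithmic factor), so $G$ is well-defined on the closed strip; applying dominated convergence to any sequence $z_k\to z_0$ in the closed strip gives continuity of $G$ up to the boundary, where one uses pointwise continuity in $z$ of the integrand (which is immediate where $f(x)\ne 0$, and trivial where $f(x)=0$ under the convention $|0|^z e^{i\textup{Arg } 0}=0$).

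For analyticity on the open strip, I would verify the formal differentiation identity
$$
\frac{dG}{dz}(z)=\int_{\mathbb R^n} |f(x)|^z e^{i\,\textup{Arg } f(x)}\Big[(\log |f(x)|)\, H(z,x)+\tfrac{dH}{dz}(z,x)\Big]\, dx.
$$
The pointwise $z$-derivative of the original integrand equals the bracketed expression times $|f(x)|^z e^{i\textup{Arg } f(x)}$, and its modulus is controlled by $\max(|f(x)|^a,|f(x)|^b)(1+|\log|f(x)||)H_*(x)$, thanks to $|dH/dz|\le H_*$. This is exactly the quantity the hypothesis assumes is integrable. To justify passing the derivative under the integral, one writes the difference quotient as a path integral of the $z$-derivative of the integrand along a short segment, bounds the integrand of that path integral by the same majorant (after shrinking $h$ so as to stay in a slightly smaller strip and absorbing a factor $e^{|h|\,|\log|f||}$ into $\max(|f|^a,|f|^b)$), and applies dominated convergence.

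The main obstacle is this last absorption step, which requires working inside a strictly smaller interior strip $a+\delta<\Re z<b-\delta$ so that increments $z+h$ with $|h|<\delta/2$ still lie in the original strip and the exponential $e^{|h|\,|\log|f||}$ is dominated by a ratio of the form $\max(|f|^{a'},|f|^{b'})/\max(|f|^a,|f|^b)$ for some $a<a'<b'<b$. A slightly cleaner alternative is to bypass differentiation entirely via Morera's theorem: the integral of $G$ around any triangular contour $\gamma$ in the open strip equals, by Fubini (justified by the $L^1$ bound on $\max(|f|^a,|f|^b)H_*$ already noted), the spatial integral of $\oint_\gamma |f(x)|^z H(z,x)\, dz$, and for each fixed $x$ this inner contour integral vanishes by Cauchy's theorem, since $z\mapsto |f(x)|^z H(z,x)$ is holomorphic on the open strip. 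Either route delivers the desired analyticity together with boundary continuity.
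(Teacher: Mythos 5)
Your proposal is correct; note, however, that this paper does not actually prove Lemma~\ref{lem:analint} --- it is quoted from the companion paper \cite{GHH2016I} --- so there is no in-text proof to match, and your argument stands or falls on its own. It stands: the Morera--Fubini route in particular is the standard, complete argument. Continuity on the closed strip follows by dominated convergence with majorant $\max\{\abs{f}^a,\abs{f}^b\}H_*$ (using $\abs{\,\abs{f(x)}^z\,}=\abs{f(x)}^{\Re z}\le\max\{\abs{f(x)}^a,\abs{f(x)}^b\}$ and the convention at zeros of $f$); the only point you leave implicit is that $H(\cdot,x)$ is a priori given only on the open strip, so for boundary continuity you should note that the bound $\abs{dH/dz}\le H_*(x)$ makes $H(\cdot,x)$ Lipschitz in $z$, hence continuously extendable to the closed strip (in the application $H$ is in fact entire in $z$, so this is harmless). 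For analyticity, the Morera argument is airtight: Fubini is justified by the same $L^1$ majorant, and the inner contour integral vanishes by Cauchy's theorem for each fixed $x$. Your differentiation route is also fine, and in fact simpler than you make it: once $z+th$, $0\le t\le 1$, stays in the open strip, the difference quotient written as $\int_0^1\partial_z\Phi(z+th,x)\,dt$ is dominated directly by $\max\{\abs{f}^a,\abs{f}^b\}\big(1+\abs{\log\abs{f}}\big)H_*$ --- which is exactly what the hypothesis with the logarithmic factor provides --- so no absorption of $e^{\abs{h}\,\abs{\log\abs{f}}}$ into a smaller strip is needed. Either way the conclusion follows, and your observation that the log factor is needed only for the derivative bound, not for well-definedness or continuity, is accurate.
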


\begin{proof}
  Let $A = \set{x\ :\ f(x)\ne 0}.$ For $x\in A$ denote
  $$
  F(z,x) = \abs{f(x)}^ze^{i \textup{Arg } f(x)} H(z,x).
  $$
   Fix $a<\Re(z_0)<b $. Then $F(z,x)$ is analytic at $z_0$ for all $x\in A$ as 
  \begin{align*}
 & \lim_{z\to z_0}\dfrac{F(z,x)-F(z_0, x)}{z-z_0} \\
 & = \abs{f(x)}^{z_0}\log \abs{f(x)} \,\, e^{i \textup{Arg }  f(x)}
  H(z_0,x) + \abs{f(x)}^{z_0} e^{i \textup{Arg } f(x)}  \f{dH }{dz}(z_0,x)\, .
\end{align*}
We also have
  $$
  \abs{\dfrac{F(z,x)-F(z_0, x)}{z-z_0}}\le
  \max\set{\abs{f(x)}^a,\abs{f(x)}^b}\Big( 1+ \abs{\log \abs{f(x)} }  \Big)  H_*(x)
  %+\max\{\abs{f(x)}^a,\abs{f(x)}^b\}H_*(x)
  $$
  for all $x\in A.$ By the Lebesgue dominated convergence theorem, the function $G$ is analytic and its derivative is
  $$
  G'(z)= \int_{\mathbb R^n} \bigg[ \abs{f(x)}^{z }\log(\abs{f(x)})e^{i \textup{Arg }  f(x)}
  H(z ,x) + \abs{f(x)}^{z } e^{i \textup{Arg } f(x)}  \f{dH }{dz}(z ,x)   \bigg]  dx
  $$
Also,   $G$ is   continuous up to the boundary $\Re(z)=a$ and $\Re(z) = b.$
\end{proof}

\begin{lem}[{\cite{Grafa14CFA}}]\label{lem:ThreeLines} %[Modern Fourier Analysis - Grafakos]
  Let $F$ be analytic on the open strip $S=\set{z\in\mathbb{C}\ :\ 0<\Re(z)<1}$ and continuous on its closure.
  Assume that for all  $0\le \tau \le 1$ there exist functions $A_\tau$ on the { real} line such that
$$
    | F(\tau+it) |  \le A_\tau(t)    \qquad \textup{  for all $t\in\mathbb{R}$,}
$$
and suppose that there exist constants   $A>0$ and $0<a<\pi$ such that for all $t\in \mathbb R$ we have
  $$
0<  A_\tau(t) \le \exp \big\{ A e^{a |t|} \big\} \, .
  $$
 Then   for  $0<\theta<1 $   we have
  $$
  \abs{F(\theta )}\le \exp\left\{
  \dfrac{\sin(\pi \theta)}{2}\int_{-\infty}^{\infty}\left[
  \dfrac{\log |A_0(t ) | }{\cosh(\pi t)-\cos(\pi\theta)}
  +
  \dfrac{\log | A_1(t )| }{\cosh(\pi t)+\cos(\pi\theta)}
  \right]dt
  \right\}\, .
  $$
\end{lem}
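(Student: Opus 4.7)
The two kernels appearing on the right-hand side are precisely the Poisson kernels of the strip $S=\{z\in\mathbb{C}:0<\Re z<1\}$ at the interior point $\theta$, written as densities on the two boundary lines $\Re z=0$ and $\Re z=1$. The plan is therefore to establish the Poisson integral representation
\[
\log|F(\theta)|\le\int_{-\infty}^\infty \log|F(it)|\,\omega_0(t,\theta)\,dt+\int_{-\infty}^\infty \log|F(1+it)|\,\omega_1(t,\theta)\,dt,
\]
for the subharmonic function $\log|F|$, and then use the boundary bounds $|F(\tau+it)|\le A_\tau(t)$. Identification of the kernels is a direct computation: the conformal map $\Phi(z)=e^{i\pi z}$ sends $S$ biholomorphically onto the upper half-plane $\mathbb{H}$ with $\Phi(\theta)=e^{i\pi\theta}$, and pulling the Poisson kernel of $\mathbb{H}$ back through $\Phi$ yields $\omega_0$ and $\omega_1$ in the stated form. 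For $F$ bounded on $\overline S$, this representation is the classical Poisson inequality for subharmonic functions on a strip.

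The substantive step is extending the representation to the admissible growth class $|F(\tau+it)|\le\exp(Ae^{a|t|})$ with $a<\pi$. I would proceed by regularization: pick $b\in(a,\pi)$ and, for each $\varepsilon>0$, introduce an entire multiplier $g_\varepsilon$ so that $F\cdot g_\varepsilon$ is bounded on $\overline S$ while $g_\varepsilon\to 1$ locally uniformly on $\overline S$ as $\varepsilon\to 0$. A convenient construction passes to the half-plane picture: the growth $\exp(Ae^{a|t|})$ on $S$ translates under $\Phi$ to $|F(w)|\le\exp(C|w|^{a/\pi})$ as $w\to\infty$, with a matching estimate as $w\to 0$; since $a/\pi<1$, such growth can be killed by a factor of the form $\exp(-\varepsilon(w^\gamma+w^{-\gamma}))$ with $a/\pi<\gamma<1$ (choosing an appropriate branch so that the factor is real and positive on $\partial\mathbb H\setminus\{0\}$). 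The Poisson representation above applies to the bounded product $F\cdot g_\varepsilon$, and letting $\varepsilon\to 0$ recovers it for $F$ itself; dominated convergence is justified because the kernels satisfy $\omega_i(t,\theta)\lesssim e^{-\pi|t|}$ while $\log A_\tau(t)\le A e^{a|t|}$ with $a<\pi$, so $\omega_i(t,\theta)\log A_\tau(t)$ is integrable.

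The main obstacle is the careful execution of this Phragm\'en--Lindel\"of-type regularization: the strict inequality $a<\pi$ is sharp, as $\pi$ matches the angular opening of the image half-plane, and it is precisely what permits the construction of a regularizer whose decay beats the double-exponential growth of $F$ on both boundary lines simultaneously. Once the Poisson representation has been established in this broader class, the stated estimate follows by substituting the pointwise boundary bound $\log|F(\tau+it)|\le\log A_\tau(t)$ into each integral and exponentiating.
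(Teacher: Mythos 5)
This lemma is not proved in the paper at all --- it is quoted from \cite{Grafa14CFA, Hirsc52} --- and your argument is essentially the proof given in those sources: identify the two kernels as the harmonic measure densities of the strip at $\theta$ via the conformal map $z\mapsto e^{i\pi z}$ (your computation of the pulled-back Poisson kernel is correct, consistent with \eqref{ide}), apply the Poisson inequality to the subharmonic function $\log|F|$ for a bounded regularization of $F$, and remove the regularization using $a<\pi$ together with the $e^{-\pi|t|}$ decay of the kernels. The one step that would fail as written is your regularizer. On the upper half-plane no single branch makes $w^{\gamma}$ and $w^{-\gamma}$ simultaneously positive on both rays of $\partial\mathbb H\setminus\{0\}$, and for $\gamma>\tfrac12$ the real part of $w^{\gamma}$ (principal branch) is negative near the ray $\arg w=\pi$, so $\exp(-\varepsilon(w^{\gamma}+w^{-\gamma}))$ is unbounded there and does not damp $F$; since you must take $\gamma>a/\pi$, your construction only covers $a<\pi/2$. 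The standard fix is to rotate the powers: use $\exp\bigl(-\varepsilon\bigl(e^{-i\gamma\pi/2}w^{\gamma}+e^{i\gamma\pi/2}w^{-\gamma}\bigr)\bigr)$, whose exponent has real part at least $\varepsilon\cos(\gamma\pi/2)\bigl(|w|^{\gamma}+|w|^{-\gamma}\bigr)$ on all of $\overline{\mathbb H}\setminus\{0\}$ precisely because $\gamma<1$; equivalently, in the strip picture take $g_\varepsilon(z)=\exp\bigl(-\varepsilon\cos(b(z-\tfrac12))\bigr)$ with $a<b<\pi$, which satisfies $|g_\varepsilon(\tau+it)|\le\exp\bigl(-\varepsilon\cos(b/2)\cosh(bt)\bigr)$ uniformly in $0\le\tau\le1$. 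With this correction (and noting that since $|g_\varepsilon|\le1$ you may simply drop its boundary contribution and then let $\varepsilon\to0$ at the single interior point $\theta$, rather than invoke dominated convergence on the boundary), the proof is complete and coincides with the cited one.
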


In calculations it is crucial to note that
\begin{equation*}\label{ide}
\dfrac{\sin(\pi \theta)}{2}\int_{-\infty}^{\infty}
  \dfrac{dt }{\cosh(\pi t)-\cos(\pi\theta)}  =1-\theta\, , \q
  \dfrac{\sin(\pi \theta)}{2}\int_{-\infty}^{\infty}
  \dfrac{dt }{\cosh(\pi t)+\cos(\pi\theta)} = \theta.
\end{equation*}

%%%%%%%%%%%%%%%%%%%%%%%%%%%%%
%%Multilinear interpolations%
%%%%%%%%%%%%%%%%%%%%%%%%%%%%%
\section{Multilinear interpolation}

In this section we prove the main tool needed to derive Theorem~\ref{main} by interpolation.   We denote by   $\vec \xi =(\xi_1,\dots, \xi_m)
$ elements of  $\mathbb R^{mn}$, 
where $\xi_j\in   \mathbb R^{n}$. 
 We fix a Schwartz function $\Psi$ on $\mathbb R^{mn}$ whose Fourier transform is supported in the annulus $1/2\le |\vec \xi \, |\le 2$ and satisfies
$$
\sum_{j} \wh{\Psi}(2^{-j}\vec \xi\, )= 1,\q\q   0\neq \vec \xi \in \mathbb R^{mn}.
$$

\begin{thm}\label{MulInterp}
Let $0<p^1_0,\dots , p^m_0 \le \nf$, $0<p^1_1,\dots , p^m_1 \le \nf$, $0<q_0,q_1\le \nf$, $0\le s_0,s_1<\nf$, $1<r_0,r_1<\infty $,
$0<\theta<1$, and let
$$
\f{1}{p^l}= \f{1-\theta}{p^l_0}+\f{\theta}{p^l_1}, \q \f{1}{q}= \f{1-\theta}{q_0}+\f{\theta}{q_1}, \q \f{1}{r}= \f{1-\theta}{r_0}+\f{\theta}{r_1}, \q
s= (1-\theta) s_0+ \theta s_1
$$
for $l=1,\dots , m$. Assume   $r_0s_0>mn$ and  $r_1s_1>mn$ and that
  \begin{equation*}
    \norm{T_\sigma(f_1,\ldots,f_m)}_{L^{q_k}(\R^n) }\le K_k\sup_{j\in\mathbb Z}\norm{\sigma(2^j\cdot)\widehat{\Psi}}_{L^{r_{k}}_{s_k}(\R^{mn})}
    \prod_{l=1}^m
    \norm{f_l}_{L^{p^{l}_k}(\R^n)}
  \end{equation*}
  for $k=0,1$ where $K_0,K_1$ are positive constants.
  Then we have the intermediate estimate:
  \begin{equation}\label{equ:TSgmMul}
    \norm{T_\sigma(f_1,\ldots,f_m)}_{L^{q}(\R^{ n})}\le C_* \, K_0^{1-\theta}K_1^{\theta}
    \sup_{j\in\mathbb Z}\norm{\sigma(2^j\cdot)\widehat{\Psi}}_{L^{r}_{s}(\R^{mn})}
    \prod_{l=1}^m\norm{f_l}_{L^{p^{l}}(\R^{ n})},
  \end{equation}
  where $C_*$ depends on all the indices, $\theta$, and the dimension.
\end{thm}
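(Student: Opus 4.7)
I would prove Theorem~\ref{MulInterp} by Stein-type multilinear analytic interpolation carried out in $m+2$ variables: the inputs $f_1,\dots,f_m$, a dual test function $g$, and the multiplier $\sigma$ itself. Lemmas~\ref{lem:016}, \ref{lem:analint} and \ref{lem:ThreeLines} will supply, respectively, the atomic decompositions, the analyticity of the resulting integrals, and the three-lines device. By duality and density it suffices to bound
$$\Lambda(\vec f,g):=\int_{\R^n}T_\sigma(f_1,\ldots,f_m)(x)\,g(x)\,dx$$
for $\vec f,g\in\mathscr C_0^\infty$, after normalizing $\|f_l\|_{L^{p^l}}=\|g\|_{L^{q'}}=1$ and $N(\sigma):=\sup_j\|\sigma(2^j\cdot)\wh\Psi\|_{L^r_s}=1$.

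For the input side I would apply Lemma~\ref{lem:016} to each $f_l$ with triple $(p_0^l,p^l,p_1^l)$ and to $g$ with $(q_0',q',q_1')$, obtaining atomic analytic families $f_l^{z,\ve}, g^{z,\ve}$ that converge at $z=\theta$ to $f_l$ and $g$ and whose $L^{p_k^l}$, $L^{q_k'}$ norms are controlled on the line $\Re z=k$. For the multiplier I would atomize each dyadic piece: for $j\in\mathbb Z$ set $\tau_j(\eta):=\sigma(2^j\eta)\wh\Psi(\eta)$ and $G_j:=(I-\De)^{s/2}\tau_j$, so that $\|G_j\|_{L^r}\le 1$ uniformly in $j$. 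A further application of Lemma~\ref{lem:016} to each $G_j$ with triple $(r_0,r,r_1)$ produces atomic analytic families $G_j^{z,\ve}$. With $s_z:=(1-z)s_0+zs_1$ (so $s_\theta=s$), define
$$\sigma_{z,\ve}(\xi):=\sum_{j\in\mathbb Z}\big[(I-\De)^{-s_z/2}G_j^{z,\ve}\big]\!(2^{-j}\xi)\cdot\chi(2^{-j}\xi),$$
where $\chi\in\mathscr C_0^\infty$ equals $1$ on $\supp\wh\Psi$ and vanishes outside a slightly enlarged annulus. The sum is locally finite in $\xi$, $\sigma_{z,\ve}$ is analytic in $z$, and $\sigma_{\theta,\ve}\to\sigma$ as $\ve\to 0$.

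With these ingredients in hand, form
$$F(z):=\int_{\R^n}T_{\sigma_{z,\ve}}\big(f_1^{z,\ve},\ldots,f_m^{z,\ve}\big)(x)\,g^{z,\ve}(x)\,dx,$$
which Lemma~\ref{lem:analint} shows is analytic on $0<\Re z<1$ and continuous on its closure. On $\Re z=k$, applying the hypothesized endpoint bound splits $|F(k+it)|$ into the interpolation norm $\sup_j\|\sigma_{k+it,\ve}(2^j\cdot)\wh\Psi\|_{L^{r_k}_{s_k}}$ times the boundary $L^{p_k^l}$, $L^{q_k'}$ norms of the atomic families; each factor grows at most polynomially in $|t|$, so the growth hypothesis of Lemma~\ref{lem:ThreeLines} is satisfied. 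That lemma, together with the identities \eqref{ide}, yields $|F(\theta)|\le C_*\,K_0^{1-\theta}K_1^\theta$, and letting $\ve\to 0$ gives \eqref{equ:TSgmMul}.

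The main obstacle is the construction of $\sigma_{z,\ve}$ and the verification of the boundary Sobolev bound uniformly in $j$. The Bessel potential $(I-\De)^{s_z/2}$ commutes neither with multiplication by $\wh\Psi$ nor with the dyadic rescaling $\xi\mapsto 2^j\xi$, so one must exploit the annular Fourier support of $\wh\Psi$ — via Kato--Ponce-type commutator estimates on a fixed annulus, together with the fact that the Bessel multiplier of purely imaginary order has polynomial-in-$|\Im z|$ operator norm on every $L^{r_k}$ — to show that these non-commutativities produce only error terms that are absorbed in $C_*$. Once this uniform-in-$j$ control is achieved, the rest of the argument runs in parallel with the classical multilinear Stein interpolation and is assembled cleanly from the three lemmas of Section~2.
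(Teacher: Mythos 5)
Your overall architecture (Lemma~\ref{lem:016} for the functions, an analytic multiplier family, Lemma~\ref{lem:analint} for analyticity, Lemma~\ref{lem:ThreeLines} for the three-lines step) matches the paper's, but the way you build the multiplier family creates genuine gaps. The paper does \emph{not} atomize the multiplier: it sets $\varphi_j=(I-\De)^{s/2}[\si(2^j\cdot)\wh\Psi]$ and defines $\si_z$ by inserting the explicit complex power $|\varphi_j|^{r(\frac{1-z}{r_0}+\frac{z}{r_1})}e^{i\,\mathrm{Arg}\,\varphi_j}$ under $(I-\De)^{-\frac{s_0(1-z)+s_1z}{2}}$, times $\wh\Phi(2^{-j}\vec\xi\,)$. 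This has two decisive features your construction lacks. First, $\si_\theta=\si$ \emph{exactly}, so the only approximation errors are in $f_l$ and $g$, and those are absorbed using the two endpoint hypotheses plus $\|\cdot\|_{L^q}\le\|\cdot\|_{L^{q_0}}^{1-\theta}\|\cdot\|_{L^{q_1}}^{\theta}$. In your scheme $\si_{\theta,\ve}$ only approximates $\si$, and the error $T_{\si-\si_{\theta,\ve}}$ can only be controlled through the hypotheses, which measure the symbol in $L^{r_0}_{s_0}$ and $L^{r_1}_{s_1}$; but Lemma~\ref{lem:016} applied to $G_j$ gives closeness only at smoothness level $s=(1-\theta)s_0+\theta s_1$, which is strictly below $\max(s_0,s_1)$, so the error term cannot be closed. (Also, Lemma~\ref{lem:016} is stated for $\mathscr C_0^\infty$ data, while $G_j$ is merely an $L^r$ function, and you would need the number of atoms and the error $\ve'$ to be uniform in $j$, which the lemma does not provide.) Second, the boundary bound in the paper is a direct computation, not a commutator estimate: on $\Re z=k$ the modulus of the complex power is $|\varphi_j|^{r/r_k}$, so its $L^{r_k}$ norm is exactly $\|\varphi_j\|_{L^r}^{r/r_k}$, and combining Sobolev embedding (this is where $r_ks_k>mn$ enters) with the polynomial $(1+|t|)^{mn/2+1}$ bound for the purely imaginary-order Bessel potential yields \eqref{200}. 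Your closing paragraph defers exactly this verification to unspecified Kato--Ponce-type commutator arguments, i.e.\ the key estimate is left undone, and with your $G_j^{z,\ve}$ in place of the complex power it is not clear it can be done.

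A second, independent gap: your duality reduction (pairing against $g$ and taking the supremum over $\|g\|_{L^{q'}}\le 1$) presupposes $q_0,q_1>1$, whereas the theorem allows $0<q_0,q_1\le\nf$, and the applications in Section~4 genuinely use targets with $q_k\le 1$ (e.g.\ interpolating from \eqref{PTB} with $q_0=1$, or from \eqref{PtP}--\eqref{PtT}). The paper treats this in a separate Case~II via the Stein--Weiss subharmonicity lemmas (Lemmas~\ref{USubHar} and \ref{VSubHar}), working with $G(z)=\int|T_{\si_z}(f_1^{z,\ve},\ldots,f_m^{z,\ve})|^{q/\rho}|g^z|\,dx$ for simple $g$ with $\|g\|_{L^{\rho'}}=1$. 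Your proposal omits this case entirely, so even after repairing the multiplier family it would prove only a restricted version of the statement.
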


\begin{proof}

  Fix a smooth function $\widehat{\Phi}$ on $\mathbb R^{mn}$ such that $\supp(\Phi)\subset\big\{\frac14\le |{\vec \xi\, }|\le 4\big\}$ and $\wh{\Phi}\equiv 1$ on the support of the function $\widehat{\Psi}.$ Denote
  $
  \varphi_j = (I-\Delta)^{\frac{s}2}[\sigma(2^j\cdot)\widehat{\Psi}]
  $
  and define
\begin{equation}\label{100}
  \sigma_z(\vec \xi\,) = \sum_{j\in \mathbb Z}(I-\Delta)^{-\frac{s_0(1-z)+s_1 z}{2}}
  \left[
 |\varphi_j|^{r(\frac{1-z}{r_0}+\frac{z}{r_1})}e^{i \textup{Arg } (\varphi_j)}
  \right](2^{-j}\vec \xi\,)\widehat{\Phi}(2^{-j}\vec \xi\,).
\end{equation}
 This sum has only finitely many terms and we now estimate its $\li$ norm.

 Fix $\vec \xi \in \mathbb R^{mn}$. Then there is a $j_0$ such that $|\vec \xi \,| \approx 2^{j_0}$ and there are only two terms in the sum in
 \eqref{100}.  For these terms we estimate  the $\li$ norm of
 $(I-\Delta)^{-\frac{s_0(1-z)+s_1 z}{2}}
  \big[ |\varphi_j|^{r(\frac{1-z}{r_0}+\frac{z}{r_1})}e^{i \textup{Arg } (\varphi_j)} \big]$.
  For   $z=\tau+it$ with $0\le \tau\le 1$, let $s_\tau= (1-\tau)s_0+\tau s_1$ and $1/r_\tau = (1-\tau)/r_0+\tau /r_1$.
  By the Sobolev embedding theorem
  we have
\begin{align}
\  \Big\| (I-\Delta)^{-\frac{s_0(1-z)+s_1 z}{2}}&
  \big[ |\varphi_j|^{r(\frac{1-z}{r_0}+\frac{z}{r_1})}e^{i \textup{Arg } (\varphi_j)} \big]  \Big\|_{\li (\R^{mn})}\notag\\
\le  &\  C(r_\tau ,{s_\tau},mn)
\Big\| (I-\Delta)^{-\frac{s_0(1-z)+s_1 z}{2}}
  \big[ |\varphi_j|^{r(\frac{1-z}{r_0}+\frac{z}{r_1})}e^{i \textup{Arg } (\varphi_j)} \big]  \Big\|_{ L^{r_\tau}_{s_\tau} (\R^{mn}) }\notag \\
  \le  &\  C(r_\tau ,{s_\tau},n)\Big\| (I-\Delta)^{it \frac{s_0-s_1 }{2}}
  \big[ |\varphi_j|^{r(\frac{1-z}{r_0}+\frac{z}{r_1})}e^{i \textup{Arg } (\varphi_j)} \big]  \Big\|_{ L^{r_\tau}  (\R^{mn}) } \notag\\
  \le  &\  C'(r_\tau ,{s_\tau},mn)(1+|s_0-s_1 |\, |t|)^{mn/2+1} \Big\|
    |\varphi_j|^{r(\frac{1-z}{r_0}+\frac{z}{r_1})}e^{i \textup{Arg } (\varphi_j)}    \Big\|_{ L^{r_\tau}  (\R^{mn}) } \notag\\
      \le  &\  C''(r_0, r_1,s_0,s_1 , \tau ,mn)(1+ \, |t|)^{mn/2+1} \Big\|
    |\varphi_j|^{r(\frac{1-\tau}{r_0}+\frac{\tau}{r_1})}    \Big\|_{ L^{r_\tau} (\R^{mn})  } \notag\\
          =  &\  C''(r_0, r_1,s_0,s_1 , \tau ,mn)(1+ \, |t|)^{mn/2+1} \big\|    \varphi_j     \big\|_{ L^{r }  (\R^{mn}) } ^{r/r_\tau}\, .\notag
\end{align}
It follows from this that
 \begin{equation}\label{200}
\| \si_{\tau+it} \|_{\li(\R^{mn})} \le  C''(r_0, r_1,s_0,s_1 , \tau ,mn)(1+ \, |t|)^{mn/2+1}
         \Big(  \sup_{j\in\mathbb Z}\norm{\sigma(2^j\cdot)\widehat{\Psi}}_{L^{r}_{s}(\R^{mn})} \Big)^{r/r_\tau} \, .
   \end{equation}

  Let $T_{\sigma_z}$ be the family of operators associated to the multipliers $\sigma_z.$ Let $\ve $ be given.

\medskip

\textbf{Case I:} $\bm{\min(q_0,q_1)>1}$.
  Fix $f_l, g\in  \mathcal C_0^\nf(\mathbb{R}^n)$ and $0<p_0^l,p^l, p_1^l<\nf$,  $1<q_0',q',q_1'<\nf$.
  Given $\ve>0$, by Lemma \ref{lem:016} there exist functions $f_l^{z,\ve} $ and
  $ {g}^{z,\ve}$ of the form \eqref{EQ-Fz}
  such that
\begin{equation}\label{normftheta}
  \|{f_l^{\theta,\ve}-f_l}\|_{L^{p^l_1}}+  \|{f_l^{\theta,\ve}-f_l}\|_{L^{p^l_0} } <\ve,\quad \|{g^{\theta,\ve}-g}\|_{L^{q'}}<\ve,
\end{equation}
  and that for all $l=1,\dots , m$ we have
  \begin{align*}
 & \|{f_l^{it,\ve}}\|_{L^{p_0^l}}\le   \big( \norm{f_l}_{L^{p^l}}^{p^l}+\ve'\big)^{\frac {1}{p_0^l}} ,\quad
  \|{f_l^{1+it\ve}}\|_{L^{p_1^l}}\le   \big( \norm{f_l}_{L^{p^l}}^{p^l}+\ve'\big)^{\frac {1}{p_1^l}},\\
 & \|{g^{it,\ve}}\|_{L^{q_0'}}\le   \big( \norm{g}_{L^{q'}}^{q'}+\ve'\big)^{\frac {1}{q_0'}},\quad
  \norm{g^{1+it,\ve}}_{L^{q_1'}}\le  \big( \norm{g}_{L^{q'}}^{q'}+\ve'\big)^{\frac {1}{q_1'}}.
  \end{align*}
  Define
  $$
  \begin{aligned}
  F(z) =& \int_{\mathbb R^n} T_{\sigma_z}(f_1^{z,\ve},\ldots,f_m^{z,\ve}) {g}^{z,\ve}\; dx\\
  =&\int_{\mathbb R^{mn}} \sigma_z(\vec\xi\,)\widehat{f_1^{z,\ve}}(\xi_1)\cdots\widehat{f_m^{z,\ve}}(\xi_m)
  \widehat{g^{z,\ve}}(-(\xi_1+\cdots+\xi_m))\; d\vec\xi\\
  =& \sum_{j\in \mathbb Z}\int_{\mathbb R^{mn}} (I-\Delta)^{-\frac{s_0(1-z)+s_1 z}{2}}
  \left[
  |\varphi_j|^{r(\frac{1-z}{r_0}+\frac{z}{r_1})}e^{i \textup{Arg } (\varphi_j)}
  \right](2^{-j}\xi)\widehat{\Phi}(2^{-j}\vec\xi\,)\\
  &\times\Big(\prod_{l=1}^m\widehat{f_l^{z,\ve}}(\xi_l)\Big)\widehat{ {g}^{z,\ve}}(-(\xi_1+\cdots+\xi_m))\; d\vec\xi\\
  =& \sum_{j\in \mathbb Z}\int_{\mathbb R^{mn}}
  \bigg[|{\varphi_j}|^{r(\frac{1-z}{r_0}+\frac{z}{r_1})}e^{i \textup{Arg } (\varphi_j)}
  \bigg](2^{-j}\vec\xi\,)\\
 &\times (I-\Delta)^{-\frac{s_0(1-z)+s_1 z}{2}}\bigg[\widehat{\Phi}(2^{-j}\vec\xi\,)\Big(\prod_{l=1}^m\widehat{f_l^{z,\ve}}(\xi_l)\Big)\widehat{ {g}^{z,\ve}}(-(\xi_1+\cdots+\xi_m))\bigg](\vec\xi\,)\; d\vec\xi.
  \end{aligned}
  $$
  Notice that
  $$
(I-\Delta)^{-\frac{s_0(1-z)+s_1 z}{2}}\bigg[\widehat{\Phi}(2^{-j}\vec\xi\,)\Big(\prod_{l=1}^m\widehat{f_l^{z,\ve}}(\xi_l)\Big)\widehat{ {g}^{z,\ve}}(-(\xi_1+\cdots+\xi_m))\bigg](\vec\xi\,)
  $$
  is equal to a finite sum (over $k_1,\dots , k_m,l$) of terms the form
  $$
% \sum_{k_1,\dots , k_m,l}
 |c_{k_1}^\ve|^{\frac {p_1}{p_1^0}(1-z) + \frac {p_1}{p_1^1} z}
 \cdots |c_{k_m}^\ve|^{\frac {p_m}{p_m^0}(1-z) + \frac {p_m}{p_m^1} z}   |d_l^\ve|^{\frac{ q'}{q_0'}(1-z) + \frac {q'}{q_1'} z}
  (I-\Delta)^{-\frac{s_0(1-z)+s_1 z}{2}}
  \left[\widehat{\Phi}(2^{-j}\cdot)  \zeta_{k_1,\dots , k_m,l} \right]  (\vec\xi\,) ,
  $$
  which we call $  H(\vec \xi,z)$,
  where $\zeta_{k_1,\dots , k_m,l}$ are Schwartz functions. Thus $  H(\vec \xi,z)$ is an analytic function in $z$.

  Lemma \ref{lem:analint} guarantees that $F(z)$ is analytic on the strip $0<\Re(z)<1$ and continuous up to the boundary. Furthermore, by H\"older's inequality,
  $$
  \abs{F(it)}\le \norm{T_{\sigma_{it}}(f_1^{it,\ve} , \dots ,f_m^{it,\ve}  )}_{L^{q_0}}\norm{g_{it}^\ve}_{L^{q_0'}},
  $$
and  noting that only the terms with $j=k-1,k,k+1$ survive in the sum in \eqref{100} for  $\sigma_{it}(2^k\cdot)\widehat{\Psi}$, 
the Kato-Ponce inequality 
\cite{GO,KP}
 applied 
as $\| (I-\De)^{s/2} (F\wh \Phi) \|_{L^{r_0}} \le C \| (I-\De)^{s/2} (F ) \|_{L^{r_0}}$ yields  
  \begin{align*}
  \quad  \|T_{\sigma_{it}}&(f_1^{it,\ve} , \dots ,f_m^{it,\ve}  )\|_{L^{q_0}} \\
  \le&   K_0\sup_{k\in\mathbb Z}
  \norm{\sigma_{it}(2^k\cdot)\widehat{\Psi}}_{L^{r_0}_{s_0}} \prod_{l=1}^m \|{f_l^{it,\ve}}\|_{L^{p_0^l}}\\
  \le &   C_{n,r_0,s_0} K_0\sup_{k\in\mathbb Z} 
  \big\| (I-\De)^{\f {s_0}{2} } (I-\Delta)^{-\frac{s_0(1-it)+s_1 it}{2}}
  \big[  |\varphi_k|^{r(\frac{1-it}{r_0}+\frac{it}{r_1})}e^{i \textup{Arg } (\varphi_k)}
  \big] \big\|_{L^{r_0} }\prod_{l=1}^m \|{f_l^{it,\ve}}\|_{L^{p_0^l}} \\
  \le&\, \, C(m,n,r_0,  s_0 ) (1+|s_1-s_0|\, |t|)^{\frac{mn}2+1}K_0\,
  \sup_{j\in\mathbb Z}\| {\varphi_j}\|_{L^r}^{\frac{r}{r_0}}
 \prod_{l=1}^m   \big( \norm{f_l}_{L^{p^l}}^{p^l}+\ve'\big)^{\frac {1}{p^l_0}} \\
  = &\, \,  C(m,n,r_0 ,   s_0, s_1 )  (1+  |t|)^{\frac{mn}2+1}         K_0\,
  \sup_{j\in\mathbb Z}
  \norm{(I-\Delta)^{\frac{s}2}[\sigma(2^j\cdot)\widehat{\Psi}]}_{L^r}^{\frac{r}{r_0}}
 \prod_{l=1}^m   \big( \norm{f_l}_{L^{p^l}}^{p^l}+\ve'\big)^{\frac {1}{p^l_0}} .
  \end{align*}
  Thus, for some constant $C=C(m,n,r_0,s_0,s_1)$ we have
  $$
  \abs{F(it)}\le C  (1+\abs{t})^{\frac{mn}2+1}K_0
  \sup_{j\in\mathbb Z}
  \norm{(I-\Delta)^{\frac{s}2}[\sigma(2^j\cdot)\widehat{\Psi}]}_{L^r}^{\frac{r}{r_0}}
  \big( \norm{g}_{L^{q'}}^{q'}+\ve'\big)^{\frac {1}{q_0'}}
   \prod_{l=1}^m   \big( \norm{f_l}_{L^{p^l}}^{p^l}+\ve'\big)^{\frac {p^l}{p^l_0}}.
   %= A_0(t).
  $$
  Similarly, we can choose the constant $C=C(m,n,r_1,s_0,s_1)$ above large enough so that
  $$
\abs{F(1+it)}\le C  (1+\abs{t})^{\frac{mn}2+1}K_1
  \sup_{j\in\mathbb Z}
  \norm{(I-\Delta)^{\frac{s}2}[\sigma(2^j\cdot)\widehat{\Psi}]}_{L^r}^{\frac{r}{r_1}}
  \big( \norm{g}_{L^{q'}}^{q'}+\ve'\big)^{\frac {1}{q_1'}}
   \prod_{l=1}^m   \big( \norm{f_l}_{L^{p^l}}^{p^l}+\ve'\big)^{\frac {1}{p^l_1}}.
  %  =A_1(t).
  $$

Note that $F(z)$ is a combination of finite terms of the form
\[
\Lambda_{k_1,\dots , k_m,l}(z)
 \int_{\mathbb R^{mn}} \sigma_z(\vec\xi\,)\widehat{h_{j_1}^{1,\ve}}(\xi_1)\cdots\widehat{h_{j_m}^{m,\ve}}(\xi_m)
  \widehat{g_j^\ve}(-(\xi_1+\cdots+\xi_m))\; d\vec\xi,
\]
where $\Lambda_{k_1,\dots , k_m,l}(z) = |c_{k_1}^\ve|^{\frac {p_1}{p_1^0}(1-z) + \frac {p_1}{p_1^1} z}
 \cdots |c_{k_m}^\ve|^{\frac {p_m}{p_m^0}(1-z) + \frac {p_m}{p_m^1} z}   |d_l^\ve|^{\frac{ q'}{q_0'}(1-z) + \frac {q'}{q_1'} z}$ and $h_{j_1}^{1,\ve}$, $g_j^\ve$ are smooth functions with compact support. Thus for $z=\tau+it$,  $t\in\mathbb{R}$  and $0\le \tau \le  1$
it follows from \eqref{200} and from the definition of $F(z)$  that
$$
|F(z)|\le C(\tau,\epsilon,f_1,\ldots,f_m,g,r_l,p_l,q_0,q_1)
(1+ \, |t|)^{\f{mn}2+1}
         \Big(  \sup_{j\in\mathbb Z}\norm{\sigma(2^j\cdot)\widehat{\Psi}}_{L^{r}_{s}} \Big)^{r/r_\tau}
=A_\tau(t)
$$
As $A_\tau(t)\le
\exp(A e^{a|t|})  $,  the admissible growth hypothesis of Lemma~\ref{lem:ThreeLines} is satisfied. Applying Lemma~\ref{lem:ThreeLines} we obtain
\begin{equation}\label{Ftheta}
  \abs{F(\theta)}\le C\, K_0^{1-\theta} K_1^{\theta}\sup_{j\in\mathbb Z}
  \norm{(I-\Delta)^{\frac{s}2}[\sigma(2^j\cdot)\widehat{\psi}]}_{L^r}
 \big( \norm{g}_{L^{q'}}^{q'}+\ve'\big)^{\f 1{q'}}
   \prod_{l=1}^m   \big( \norm{f_l}_{L^{p^l}}^{p^l}+\ve'\big)^{\f{1}{p^l}}.
\end{equation}
  But
  $$
  F(\theta) = \int_{\mathbb R^{n}}
  T_{\sigma}(f_1^{\theta,\ve},\ldots,f_m^{\theta,\ve}) g^{\theta,\ve}\; dx
  $$
and then we have
\begin{align*}
 \int_{\mathbb R^n}
 T_{\sigma}(f_1,\ldots,f_m)g\; dx
 = F(\theta)
 &+\int_{\mathbb R^n} \big[ T_{\sigma}(f_1,\ldots,f_m)- T_{\sigma}(f_1^{\theta,\ve},\ldots,f_m^{\theta,\ve}) \big]g\; dx \\
 &+ \int_{\mathbb R^n} T_{\sigma}(f_1^{\theta,\ve},\ldots,f_m^{\theta,\ve})\big( g-g^{\theta,\ve} \big)\; dx .
\end{align*}
A telescoping identity yields 
\[
|T_{\sigma}(f_1,\ldots,f_m)- T_{\sigma}(f_1^{\theta,\ve},\ldots,f_m^{\theta,\ve})|\le
\sum_{l=1}^m| T_{\sigma}(f_1,\ldots,f_{l-1},f_l-f_l^{\theta,\ve},f_{\theta}^{l+1,\ve},\ldots,f_m^{\theta,\ve}) | .
\]
In view of the    hypothesis that
 $T_\sigma$ is bounded from $L^{p_k^1}\times\cdots\times L^{p_k^m}$ to $L^{q_k}$ and of   \eqref{normftheta}, we obtain
\begin{equation}\label{tendstozero}
\| T_{\sigma}(f_1,\ldots,f_m)- T_{\sigma}(f_1^{\theta,\ve},\ldots,f_m^{\theta,\ve})\|_{L^{q_k}}
\le C(q_0,q_1)\ve
\sum_{l=1}^m\prod_{j\ne l}\big(\|f_j\|_{L^{p^j_k}}^{p^j }+\ve'\big)^{\f 1{{p^j }}}
\end{equation}
and
\[
\| T_{\sigma}(f_1^{\theta,\ve},\ldots,f_m^{\theta,\ve}) \|_{L^{q_k}}\le
C(q_0,q_1)\prod_{l=1}^{m}\big(\|f_l\|_{L^{p^l_k}}^{p^l}+\ve'\big)^{\f{1}{p^l}}
\]
for $k=0,1$.
Using the fact that $\|f\|_{L^q}\le  C\|f\|_{L^{q_0}}^{1-\theta}\|f\|_{L^{q_1}}^{\theta}$,  we
deduce the  estimates
\begin{align*}
  &\| T_{\sigma}(f_1,\ldots,f_m)- T_{\sigma}(f_1^{\theta,\ve},\ldots,f_m^{\theta,\ve})\|_{L^q} \\
  &\le C\ve\Big(\sum_{l=1}^m\prod_{j\ne l}\big(\|f_j\|_{L^{p^j_0}}^{p^j }+\ve'\big)^{\f 1{{p^j }}}\Big)^{1-\theta}
  \Big(\sum_{l=1}^m\prod_{j\ne l}\big(\|f_j\|_{L^{p^j_1}}^{p^j }+\ve'\big)^{\f 1{{p^j }}}\Big)^{\theta},
  \end{align*}
$$
  \| T_{\sigma}(f_1^{\theta,\ve},\ldots,f_m^{\theta,\ve}) \|_{L^q}\le
  C\Big(\prod_{l=1}^{m}\big(\|f_l\|_{L^{p^l_0}}^{p^l}+\ve'\big)^{\f{1}{p^l}}\Big)^{1-\theta}
  \Big(\prod_{l=1}^{m}\big(\|f_l\|_{L^{p^l_1}}^{p^l}+\ve'\big)^{\f{1}{p^l}}\Big)^{\theta}.
$$

These inequalities together with  \eqref{normftheta}, \eqref{Ftheta} and Holder's inequality yield
  \begin{align*}
 & \bigg|{\int T_{\sigma}(f_{1},\ldots,f_{m})g\; dx}\bigg|  \\
  \le &
  C\ve\Big(\sum_{l=1}^m\prod_{j\ne l}\big(\|f_j\|_{L^{p^j_0}}^{p^j }+\ve'\big)^{\f 1{{p^j }}}\Big)^{1-\theta}
  \Big(\sum_{l=1}^m\prod_{j\ne l}\big(\|f_j\|_{L^{p^j_1}}^{p^j }+\ve'\big)^{\f 1{{p^j }}}\Big)^{\theta}\|g\|_{L^{q'}}\\
  &+ C\ve\Big(\prod_{l=1}^{m}\big(\|f_l\|_{L^{p^l_0}}^{p^l}+\ve'\big)^{\f{1}{p^l}}\Big)^{1-\theta}
  \Big(\prod_{l=1}^{m}\big(\|f_l\|_{L^{p^l_1}}^{p^l}+\ve'\big)^{\f{1}{p^l}}\Big)^{\theta}\\
  &+ C K_0^{1-\theta} K_1^{\theta}\sup_{j\in\mathbb Z}
  \norm{(I-\Delta)^{\frac{s}2}[\sigma(2^j\cdot)\widehat{\psi}]}_{L^r}
 \big( \norm{g}_{L^{q'}}^{q'}+\ve'\big)^{\f{1}{q'}}
   \prod_{l=1}^m   \big( \norm{f_l}_{L^{p^l}}^{p^l}+\ve'\big)^{\f{1}{p^l}}.
  \end{align*}
  Now let $\ve$ go to zero, then obtain
  \[
  \bigg|{\int T_{\sigma}(f_{1},\ldots,f_{m})g\; dx}\bigg|\le C K_0^{1-\theta} K_1^{\theta}\sup_{j\in\mathbb Z}
  \norm{(I-\Delta)^{\frac{s}2}[\sigma(2^j\cdot)\widehat{\psi}]}_{L^r}
  \norm{g}_{L^{q'}}
   \prod_{l=1}^m    \norm{f_l}_{L^{p^l}} .
  \]
Taking supremum over all functions $g\in L^{q'}$ yields \eqref{equ:TSgmMul}.

\medskip
\textbf{Case II:} $\bm{\min(q_0,q_1) \le 1}$.

Here we will make use of two following lemmas  proved by Stein and Weiss \cite{SW57}.
\begin{lem}[\cite{SW57}]
\label{USubHar}
  Let $U:\overline{S}\To \mathbb{R}$ be an upper semi-continuous function of admissible growth and subharmonic in the strip $S$. Then for $z_0=x_0+iy_0\in S$ we have
  $$
  U(z_0)\le \int_{-\infty}^{\infty}U\big(i(y_0+y)\big)\omega(1-x_0,y)dy
  + \int_{-\infty}^{\infty}U\big(1+i(y_0+y)\big)\omega(x_0,y)dy,
  $$
  where
  $$
  \omega(x,y) = \dfrac12\dfrac{\sin \pi x}{\cos \pi x+\cosh \pi y}.
  $$
\end{lem}
\begin{lem}[\cite{SW57}]
\label{VSubHar}
Let   $0<c\le 1$.  If the function $V(x,z)$   defined on $\mathbb R^n\times S$   is analytic in $z\in S$ for all  $x \in \mathbb R^n$, and  for all $z\in S$, $V(\cdot , z)$ is integrable and
supported in a set of finite measure, then   $G(z) = \int_{\mathbb{R}^n}\abs{V(x,z)}^cdx$ is continuous and $\log G(z)$ is subharmonic.
\end{lem}

We now continue the proof of the second case. Let fix functions $f_l$ as in the previous case. Choose an integer $\rho>1$ such that $\rho\ge \rho\min(q_0,q_1)>q.$ Take an arbitrary positive simple function $g$ with $\norm{g}_{L^{\rho'}}=1.$ Assume that
$g = \sum_{k=1}^N c_k\chi_{E_k},$ where $c_k>0$ and $E_k$ are pairwise disjoint measurable sets of finite measure. For $z\in\mathbb{C},$ set
\[
  g^z = \sum_{k=1}^Nc_k^{\lambda(z)}\chi_{E_k},\;\;
\textup{where}\;\;
  \lambda(z) = \rho'\left[1-\dfrac{q}{\rho}\left(\dfrac{1-z}{q_0}+\dfrac{z}{q_1} \right)\right].
\]
  Now consider
  $$
  G(z) = \int_{\mathbb R^n} \abs{T_{\sigma_z}(f_{1}^{z,\ve} ,\ldots,f_{m}^{z,\ve})(x)}^{\frac{q}{\rho}}\abs{g^z(x)}\; dx = \sum_{k=1}^N\int_{E_k}
  \abs{c_k^{\lambda(z)}T_{\sigma_z}(f_{1}^{z,\ve},\ldots,f_{m}^{z,\ve})(x)}^{\frac{q}{\rho}}\; dx .
  $$
It follows from Lemma \ref{VSubHar} that $G$ is continuous and $\log G$ is subharmonic. Using H\"older's inequality
  with indices $\frac{\rho q_0}{q} $ and $\big( \frac{\rho q_0}{q} \big)'$ and the fact that the $L^{\rho'}$-norm of $g$ is equal to $1,$ we have
  \[
  \begin{aligned}
  {G(it)}\le& \left\{\int_{\mathbb{R}^n}
  \abs{T_{\sigma_{it}}(f_{1}^{it,\ve} ,\ldots,f_{m}^{it,\ve} )(x)}^{q_0} dx\right\}^{\frac{q}{\rho q_0}}
  \big\| g^{it } \big\|_{L^{ ( \frac{\rho q_0}{q}  )'}}   \\
  \le &\, C \, \Big( (1+\abs{t})^{\frac{mn }{2}+1} \Big)^{\f q\rho}
  \left( K_0 \sup_{j\in Z}\norm{\sigma(2^j\cdot)\widehat{\psi}\,}_{ L^{r}_{s}  }^{\frac{r}{r_0}}
  \prod_{l=1}^m\big(\norm{f_{l}}_{L^{p^{l}}}^{p^l} +\ve'\big)^{\f{1}{p^l}}
  \right)^{\frac{q}{\rho}}.
  \end{aligned}
  \]
  Similarly, we can estimate
  \[
  \begin{aligned}
  {G(1+it)}\le& \left\{\int_{\mathbb{R}^n}
  \abs{T_{\sigma_{it}}(f_{1}^{1+it,\ve},\ldots,f_{m}^{1+it,\ve})(x)}^{q_1} dx\right\}^{\frac{q}{\rho q_1}}
    \big\| g^{1+it } \big\|_{L^{ ( \frac{\rho q_1}{q}  )'}} \\
  \le &\,C \,  \Big( (1+\abs{t})^{\frac{mn }{2}+1} \Big)^{\f q\rho}
  \left( K_1  \sup_{j\in Z}\norm{\sigma(2^j\cdot)\widehat{\psi}\,}_{L^{r}_{s}}^{\frac{r}{r_1}}
  \prod_{l=1}^m\big(\norm{f_{l}}_{L^{p^{l}}}^{p^l}+\ve'\big)^{\f{1}{p^l}}
  \right)^{\frac{q}{\rho}}.
  \end{aligned}
  \]
  By Lemma \ref{USubHar}, we obtain
  \begin{equation}\label{equ:TSgmLe1}
  G(\theta) \le C_*^\prime\,
  \left( K_0^{1-\theta} K_1^\theta \,
  \sup_{j\in Z}\norm{\sigma(2^j\cdot)\widehat{\psi}\,}_{L^{r}_{s}}
  \prod_{l=1}^m\big(\norm{f_{l}}_{L^{p^{l}}}^{p^l}+\ve'\big)^{\f{1}{p^l}}
  \right)^{\frac{q}{\rho}}.
  \end{equation}
  Notice that
  \[
  G(\theta) = \int_{\mathbb{R}^n}
  \abs{T_{\sigma}(f_{1}^{\theta,\ve},\ldots,f_{m}^{\theta,\ve})(x)}^{\frac{q}{\rho}}g(x)\; dx,
  \]
  inequality \eqref{equ:TSgmLe1} implies that
  \begin{align}
  \norm{T_{\sigma}(f_{1}^{\theta,\ve},\ldots,f_{m}^{\theta,\ve})}_{L^q} = &\,
  \norm{\abs{T_{\sigma}(f_{1}^{\theta,\ve},\ldots,f_{m}^{\theta,\ve})}^{\frac{q}{\rho}}}_{L^\rho}^{\frac{\rho}{q}} \notag \\
  =&\,
  \sup
  \set{
  \int \abs{T_{\sigma}(f_{1}^{\theta,\ve},\ldots,f_{m}^{\theta,\ve})(x)}^{\frac{q}{\rho}} dx :\
  g\ge 0, g\, \,\mbox{simple}, \norm{g}_{L^{\rho'}}\le 1
  }^{\frac{\rho}{q}}  \notag    \\
  \le&\, (C_*^\prime )^{\frac{\rho}{q}}\,  K_0^{1-\theta} K_1^\theta \sup_{j\in Z}\norm{\sigma(2^j\cdot)\widehat{\psi}}_{L^{r}_{s}}
  \prod_{l=1}^m\big(\norm{f_{l}}_{L^{p^{l}}}^{p^l}+\ve'\big)^{\f{1}{p^l}}. \label{aiwueha}
  \end{align}
  Finally, we write
  $$
  \| T_\si(f_1,\dots , f_m) \|_{L^q} \le C(q)    \| T_\si(f_1,\dots , f_m)-T_\si(f_1^{\theta,\ve} ,\dots , f_m^{\theta,\ve}) \|_{L^q} + C(q)
  \|T_\si(f_1^{\theta,\ve} ,\dots , f_m^{\theta,\ve}) \|_{L^q}
  $$
  and we note that for the second term we use \eqref{aiwueha}, while   the first term tends
  to zero by   \eqref{tendstozero}.
Letting $\ve\to 0$,  we deduce \eqref{equ:TSgmMul}.
\end{proof}

Note that the proof of Theorem \ref{MulInterp} is much simpler in the case $r_0=r_1=2$, and this was proved earlier  in \cite[Theorem 6.1, Step 1]{GMT13}; see also \cite[Theorem 2.3]{GN16}. In this case, the domains can be arbitrary  Hardy spaces. Here is the statement in this case:
\begin{thm}[\cite{GMT13}]\label{thm:MulInterR2}
  Let $p_0^l,p_1^l,p^l,q_0,q_1,q, s_0,s_1,s$ and $\theta$ be positive as in Theorem \ref{MulInterp} for $l=1,\ldots,m$. Assume that $s_0,s_1>\frac{mn}{2}$ and that
    \begin{equation*}
    \norm{T_\sigma(f_1,\ldots,f_m)}_{L^{q_k}(\R^n) }\le K_k\sup_{j\in\mathbb Z}\norm{\sigma(2^j\cdot)\widehat{\Psi}}_{L^{2}_{s_k}(\R^{mn})}
    \prod_{l=1}^m
    \norm{f_l}_{H^{p^{l}_k}(\R^n)}
  \end{equation*}
  for $k=0,1$ where $K_0,K_1$ are positive constants.
  Then we have the intermediate estimate:
  \begin{equation*}
    \norm{T_\sigma(f_1,\ldots,f_m)}_{L^{q}(\R^{ n})}\le C_* \, K_0^{1-\theta}K_1^{\theta}
    \sup_{j\in\mathbb Z}\norm{\sigma(2^j\cdot)\widehat{\Psi}}_{L^{2}_{s}(\R^{mn})}
    \prod_{l=1}^m\norm{f_l}_{H^{p^{l}}(\R^{ n})}
  \end{equation*}
  where $C_*$ depends on all the indices, $\theta$, and the dimension.

\end{thm}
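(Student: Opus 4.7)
My plan is to mimic the complex interpolation argument of Theorem~\ref{MulInterp} with two modifications dictated by the particular setting of Theorem~\ref{thm:MulInterR2}: a drastic simplification on the multiplier side because $r_0=r_1=2$, and a substitution of the $L^p$-approximation Lemma~\ref{lem:016} by an $H^p$-atomic approximation on the input side.

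First, I would define the analytic family of multipliers directly, without the modulus-power trick of \eqref{100}, by
$$
\sigma_z(\vec\xi\,) = \sum_{j\in\mathbb Z} (I-\De)^{-\f{s_0(1-z)+s_1 z}{2}}[\varphi_j](2^{-j}\vec\xi\,)\,\widehat\Phi(2^{-j}\vec\xi\,),
$$
with $\varphi_j$ and $\widehat\Phi$ as in the proof of Theorem~\ref{MulInterp}. Since the exponent $r(\f{1-z}{r_0}+\f{z}{r_1})$ collapses to $1$ when $r=r_0=r_1=2$, this $\sigma_z$ is automatically entire in $z$, satisfies $\sigma_\theta=\sigma$, and, thanks to the unitarity of $(I-\De)^{it(s_0-s_1)/2}$ on $L^2$, obeys
$$
\sup_{j}\|\sigma_{\tau+it}(2^j\cdot)\widehat\Psi\|_{L^2_{s_\tau}}\le C(1+|t|)^{mn/2+1}\sup_{j}\|\sigma(2^j\cdot)\widehat\Psi\|_{L^2_s}
$$
on the boundary lines $\tau\in\{0,1\}$. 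The Sobolev embedding $L^2_{s_\tau}\hookrightarrow L^\nf$, valid since $s_\tau>mn/2$, then guarantees that $\sigma_z$ is a bounded symbol throughout the strip, so $T_{\sigma_z}$ is a priori meaningful on test functions.

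Next, I would approximate each $f_l\in H^{p^l}$ by a finite atomic sum $\sum_k \lambda_k^{(l)}a_k^{(l)}$, choosing the atoms $a_k^{(l)}$ with enough vanishing moments so that they are simultaneously $(p_0^l,\nf)$- and $(p_1^l,\nf)$-atoms, and define
$$
f_l^{z,\ve}=\sum_k |\lambda_k^{(l)}|^{p^l(\f{1-z}{p_0^l}+\f{z}{p_1^l})}\sgn(\lambda_k^{(l)})\,a_k^{(l)}.
$$
Because the complex exponents act only on the scalar coefficients $\lambda_k^{(l)}$ while the atoms themselves are held fixed, $f_l^{z,\ve}$ is entire in $z$; the atomic characterization of $H^p$ gives the boundary norm bounds $\|f_l^{it,\ve}\|_{H^{p_0^l}}\lesssim(\|f_l\|_{H^{p^l}}^{p^l}+\ve')^{1/p_0^l}$ and the analogous bound at $\Re z=1$, together with $f_l^{\theta,\ve}\to f_l$ in the appropriate senses as $\ve\to 0$. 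The auxiliary scalar function $g^{z,\ve}$ is built exactly as in Case I, or as the simple-function family $g^z$ of Case II.

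With $\sigma_z$ and $f_l^{z,\ve}$ in hand, the remainder of the argument should reproduce Cases I and II of Theorem~\ref{MulInterp} almost verbatim: form $F(z)=\int T_{\sigma_z}(f_1^{z,\ve},\dots,f_m^{z,\ve})g^{z,\ve}\,dx$ when $\min(q_0,q_1)>1$ and apply Lemma~\ref{lem:ThreeLines}, or the subharmonic functional $G(z)=\int|T_{\sigma_z}(f_1^{z,\ve},\dots,f_m^{z,\ve})|^{q/\rho}|g^z|\,dx$ and Lemma~\ref{USubHar} when $\min(q_0,q_1)\le 1$. Evaluating at $z=\theta$, using the telescoping identity of Theorem~\ref{MulInterp}, and letting $\ve\to 0$ yields \eqref{equ:TSgmMul} with $H^{p^l}$-norms in place of $L^{p^l}$-norms. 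The hardest technical step will be verifying the admissible-growth hypothesis for the atomic analytic family on the boundary lines $z=it$ and $z=1+it$ — specifically, ensuring no loss worse than polynomial in $|t|$; this is classical in the Calder\'on complex method for Hardy spaces and is where the finiteness of the atomic sum becomes crucial.
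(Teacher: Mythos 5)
The paper itself does not prove Theorem~\ref{thm:MulInterR2}; it quotes it from \cite{GMT13} (Theorem 6.1, Step 1), so your argument has to stand on its own. Your treatment of the multiplier side is correct and is exactly why the paper calls this case ``much simpler'': when $r_0=r_1=r=2$ the exponent $r(\frac{1-z}{r_0}+\frac{z}{r_1})$ collapses to $1$, the family \eqref{100} reduces to the one you write, $\sigma_\theta=\sigma$, and the purely imaginary powers $(I-\Delta)^{it(s_1-s_0)/2}$ are unitary on $L^2$, so the boundary bounds on $\sup_j\|\sigma_{\tau+it}(2^j\cdot)\widehat\Psi\|_{L^2_{s_\tau}}$ hold (with no loss in $t$ at all). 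Also, the growth issue you single out as ``hardest'' is in fact harmless: the sums are finite and everything grows at most polynomially in $|t|$.

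The genuine gap is in your analytic family on the input side. You keep the atoms $a_k^{(l)}$ fixed and put all the $z$-dependence into the scalar coefficients, $f_l^{z,\ve}=\sum_k|\lambda_k^{(l)}|^{p^l(\frac{1-z}{p_0^l}+\frac{z}{p_1^l})}\sgn(\lambda_k^{(l)})\,a_k^{(l)}$. For $f_l^{\theta,\ve}$ to be the truncated atomic sum of $f_l$, the $a_k^{(l)}$ must be $(p^l,\infty)$-atoms; but then at $\Re z=0$ each piece is $|\lambda_k^{(l)}|^{p^l/p_0^l}$ times a $(p^l,\infty)$-atom, which is only $|Q_k|^{1/p_0^l-1/p^l}$ times a $(p_0^l,\infty)$-atom, and this factor is unbounded over cube sizes. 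Hence the claimed bound $\|f_l^{it,\ve}\|_{H^{p_0^l}}\le C\big(\|f_l\|_{H^{p^l}}^{p^l}+\ve'\big)^{1/p_0^l}$ does not follow, and no fixed normalization can make the atoms ``simultaneously'' $(p_0^l,\infty)$- and $(p_1^l,\infty)$-atoms while still summing back to $f_l$ at $z=\theta$. The repair is to let the amplitude of each atom move analytically as well, i.e.\ insert a factor $|Q_k|^{\beta_l(z)}$ with $\beta_l$ affine and $\beta_l(\theta)=0$, chosen so that at $\Re z=k$ each piece is an honest $(p_k^l,\infty)$-atom whose coefficient raised to the power $p_k^l$ is comparable to $|\lambda_k^{(l)}|^{p^l}$; this Calder\'on--Torchinsky-type construction (\cite{CT77}, Theorem 3.3) is the Hardy-space analogue of Lemma~\ref{lem:016} that \cite{GMT13} and \cite{GN16} use, and your sketch silently assumes it. Relatedly, your telescoping step requires $f_l-f_l^{\theta,\ve}$ to be small in \emph{both} endpoint quasi-norms $H^{p_0^l}$ and $H^{p_1^l}$, not merely in $H^{p^l}$, so the dense class (finite sums of smooth, compactly supported atoms with sufficiently many vanishing moments) and the approximation statement must be set up to deliver that; as written this is asserted rather than proved.
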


\begin{comment}
It remains to consider the case when   $p_0=1$.   To prove \eqref{Conc} under this assumption we need analytic approximations $f_z^\ve$ of $f$ with $f_{it}^\ve$ in $H^1(\mathbb R^n)$ and $f_{1+it} $ in
$L^{p_1} (\mathbb R^n)$. This was achieved by
Calder\'on and Torchinsky \cite[Theorem 3.3]{CT77} who proved the following:

Given $\ve>0$, $1<p<2$,  and $f\in L^p(\mathbb R^n)$ there exist finitely many functions $h_1,\dots ,h_K$ and real
numbers $a_1, \dots , a_K$ such that $\wh{h_j}$ lies in $\mathcal C_0^\nf(\mathbb R^n)$ or $h_j\in \mathcal C_0^\nf(\mathbb R^n)$    and
such that
$$
F_z^\ve= \sum_{j=1}^K e^{a_j z} h_j
$$
satisfies $\| f-F_1^\ve \|_{L^p}<\ve$ and when $\Re z= p $ we have $\|F_z^\ve \|_{H^1} \le c \|f \|_{L^p}$
and when $\Re z= p /2$ we have $\|F_z^\ve \|_{L^2} \le c \|f \|_{L^p}$.
Define $\theta $ via $\f{1-\theta}{1}+\f{\theta}2= \f 1p$. Then the functions
$$
f_z^\ve= \sum_{j=1}^K e^{a_j (-\f p2 z+p) } h_j
$$
satisfy $\| f-f_\theta^\ve \|_{L^p}<\ve$,   $\|f_{it}^\ve \|_{H^1} \le c \|f \|_{L^p}$, and $\|f_{1+it}^\ve \|_{L^2} \le c \|f \|_{L^p}$
for all $t$ real.  The constant $c$ here only depends on the dimension.  Having this tool at our disposal, we proceed as in the case where $p_0>1$.

\end{comment}

\section{The proof of the main result via interpolation}
We now turn to the proof of Theorem~\ref{main}.

%new proof%
\begin{proof}
(a)  Assume $n/2<s\le n$ and let
\[
\Gamma_1 = \Big\{
\Big(\frac{1}{p_1},\frac{1}{p_2}\Big)\ :\  \frac1{p_1}<\frac{s}{n}, \frac1{p_2}<\frac{s}{n}, 
1-\frac{s}{n}<\frac1{p}=\f{1}{p_1}+\f{1}{p_2}<\frac{s}{n}+\frac12
\Big\} . 
\]
We will prove that
  \begin{equation}\label{EQ!TSE}
  \|T_{\sigma}(f_1,f_2)\|_{L^p(\mathbb{R}^n)}\le C\sup_{j\in\mathbb{Z}}
  \|\sigma(2^j\cdot)\widehat{\Psi}\|_{L^r_{s}(\mathbb{R}^{2n})}
  \|f_1\|_{L^{p_1}(\mathbb{R}^n)}\|f_2\|_{L^{p_2}(\mathbb{R}^n)}
\end{equation}
for every $(\frac{1}{p_1},\frac{1}{p_2})\in \Gamma_1$,
which is a convex set with vertices $D,K,L,G,H$ and $N$ (see Figure \ref{FigA} below).
By   multilinear real interpolation \cite[Corollary 7.2.4]{Grafa14MFA}, we only need to verify the boundedness of $T_\sigma$ at   points in $\Gamma_1$ near its vertices  $D,K, L, G, H,N $ which do not lie in $ \Gamma_1$.

\begin{figure}[ht]
    \centering
    \begin{subfigure}[b]{0.4\textwidth}
        \includegraphics[width=\textwidth]{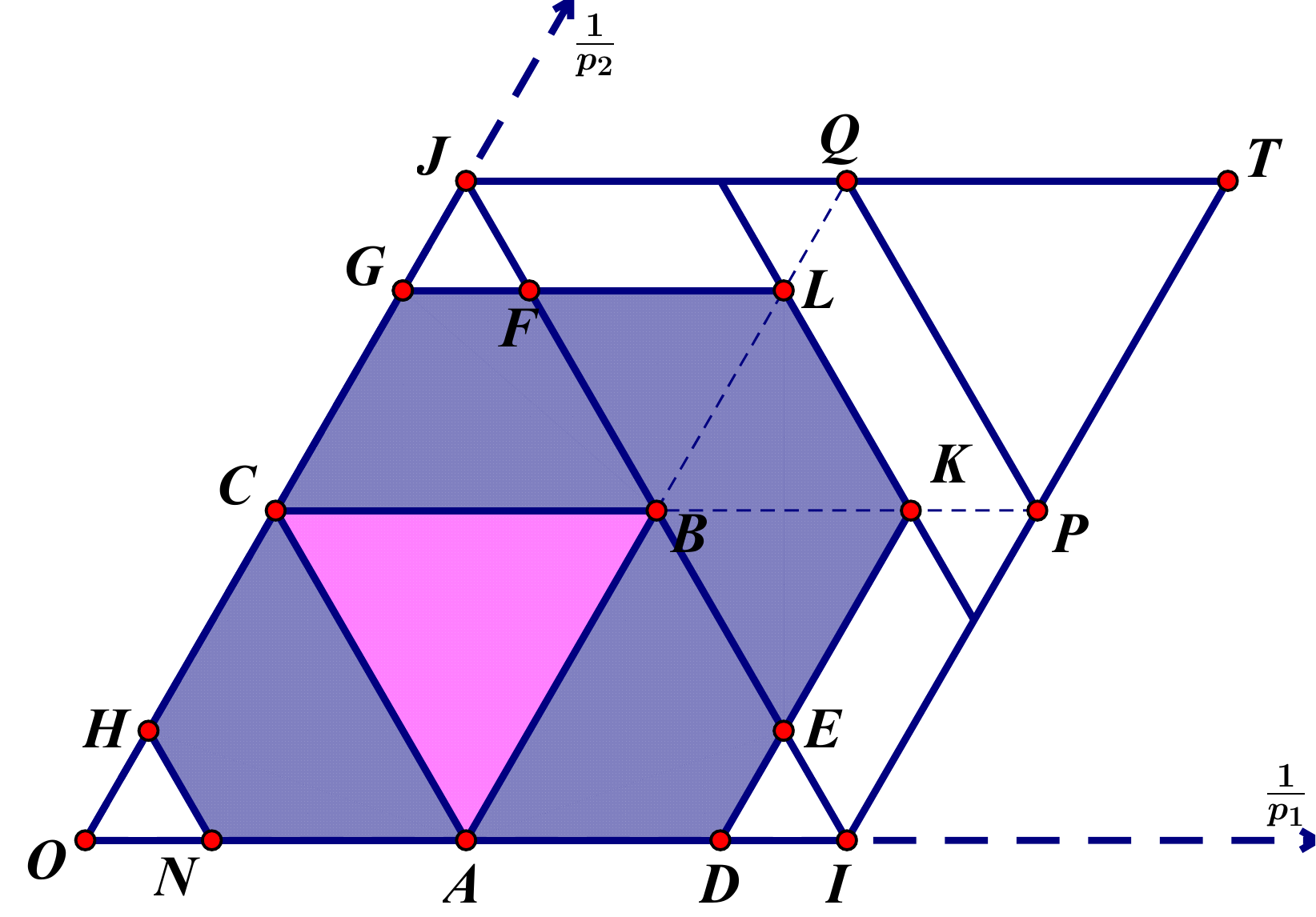}
        \caption{$\frac{n}{2}<s\le n$}
        \label{FigA}
    \end{subfigure}
    \quad %add desired spacing between images, e. g. ~, \quad, \qquad, \hfill etc.
      %(or a blank line to force the subfigure onto a new line)
    \begin{subfigure}[b]{0.4\textwidth}
        \includegraphics[width=\textwidth]{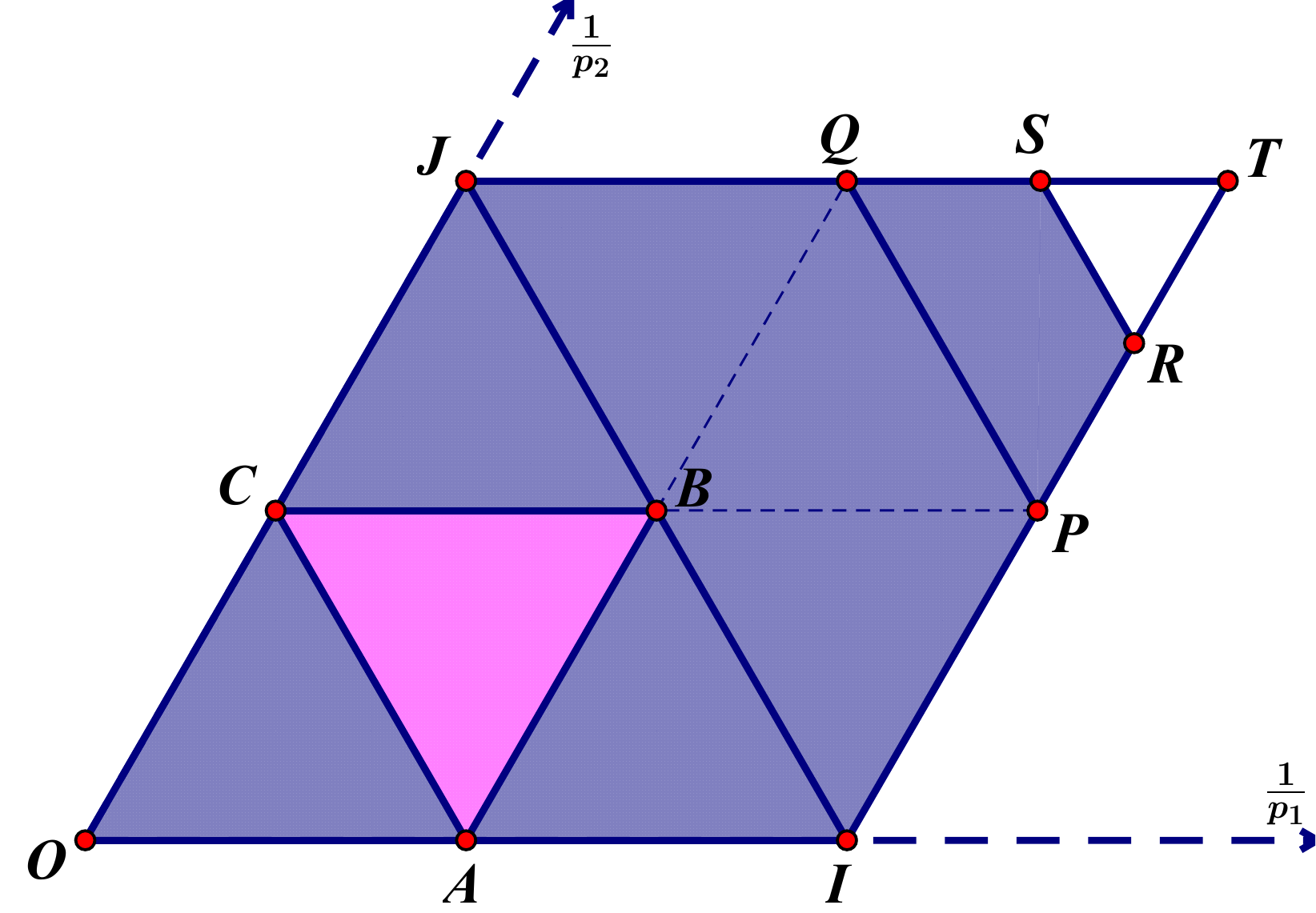}
        \caption{$n<s\le \frac{3n}{2}$}
        \label{FigB}
    \end{subfigure}
    \caption{Boundedness holds in the shaded regions and unboundedness in the white regions.
    The local $L^2$ region is shaded in a lighter color.}
    \label{Fig01}
\end{figure}

When $1\le p<\infty$, by duality, if $T_{\sigma}$ maps $L^{p_1}\times L^{p_2}\to L^p$, then it also maps $L^{p'}\times  L^{p_2}\to L^{p_1'}$, since the H\"ormander condition  $\sup_{j\in\mathbb{Z}}\|\sigma(2^j\cdot)\widehat{\Psi}\|_{L^r_s(\mathbb{R}^{2n})}$ is invariant under   duality; see   \cite{Grafa14MFA,GS12}.
Therefore, if $T_{\sigma}$ is bounded near $D$, then $T_{\sigma}$ is also bounded near $N$ by  duality. By symmetry, if $T_\sigma$ is bounded near $N,D$ and $K$ then it is bounded near $H, G$ and $L$ as well. From these reductions, it remains to prove \eqref{EQ!TSE} at   points in $\Gamma_1$ near $D$ and $K$. 

With $s_1>\frac{n}2$ and $r_1s_1>2n$, we recall the following   \cite[Theorem 1]{GHH2016II}:
\begin{equation}
\label{eq-019B}
  \|T_{\sigma}(f_1,f_2)\|_{L^1(\mathbb{R}^n)}\le C\sup_{j\in\mathbb{Z}}
  \|\sigma(2^j\cdot)\widehat{\Psi}\|_{L^{r_1}_{s_1}(\mathbb{R}^{2n})}
  \|f_1\|_{L^{2}(\mathbb{R}^n)}\|f_2\|_{L^{2}(\mathbb{R}^n)}.
\end{equation}
By duality it follows from \eqref{eq-019B}   that when $s_1>\frac{n}2$ and $r_1s_1>2n$
we have 
\begin{equation}
\label{eq-019A}
  \|T_{\sigma}(f_1,f_2)\|_{L^2(\mathbb{R}^n)}\le C\sup_{j\in\mathbb{Z}}
  \|\sigma(2^j\cdot)\widehat{\Psi}\|_{L^{r_1}_{s_1}(\mathbb{R}^{2n})}
  \|f_1\|_{L^{2}(\mathbb{R}^n)}\|f_2\|_{L^{\infty}(\mathbb{R}^n)}.
\end{equation}
Theorem 1.1 in  \cite{MT13} (with $  s_1=s_2$ in \cite{MT13} being $\gamma$ below) implies  that
\[
\|T_{\sigma}(f_1,f_2)\|_{L^q(\mathbb{R}^n)} \le
  C\sup_{j\in\mathbb{Z}}\|(I-\Delta_{\xi_1})^{\frac{\gamma}2}(I-\Delta_{\xi_2})^{\frac{\gamma}2}
  \big[\sigma(2^j\cdot)\widehat{\Psi}\big]\|_{L^2(\mathbb{R}^{2n})}
  \|f_1\|_{L^{q_1}(\mathbb{R}^n)} \|f_2\|_{L^{q_2}(\mathbb{R}^n)}
\]
for $\gamma>\frac{n}{2}$, where $1< q_1,q_2\le \infty$, $\frac{1}{q}=\frac{1}{q_1}+\frac{1}{q_2}< \frac{2\gamma}{n}+ \frac{1}{2}$. Given $s_2>n$, choose $\gamma=\frac{s_2}{2}>\frac{n}{2}$ and observing the trivial estimate 
\begin{equation*}\label{EQ!SobN}
\sup_{j\in\mathbb{Z}}\|(I-\Delta_{\xi_1})^{\frac{\gamma}2}(I-\Delta_{\xi_2})^{\frac{\gamma}2}
\big[\sigma(2^j\cdot)\widehat{\Psi}\big]\|_{L^2(\mathbb{R}^{2n})}
\le C \sup_{j\in\mathbb{Z}}\|\sigma(2^j\cdot)\widehat{\Psi}\|_{L^2_{s_2}(\mathbb{R}^{2n})},
\end{equation*}
we obtain
\begin{equation}\label{PtI}
\|T_{\sigma}(f_1,f_2)\|_{L^q(\mathbb{R}^{ n})} \le
  C\sup_{j\in\mathbb{Z}}\|\sigma(2^j\cdot)\widehat{\Psi}\|_{L^2_{s_2}(\mathbb{R}^{2n})}
  \|f_1\|_{L^{q_1}(\mathbb{R}^{ n})} \|f_2\|_{L^{q_2}(\mathbb{R}^{ n})}
\end{equation}
for all  $1< q_1,q_2\le \infty$, $\frac{1}{q}=\frac{1}{q_1}+\frac{1}{q_2}< \frac{s_2}{n}+ \frac{1}{2}$.

We now use Theorem \ref{MulInterp} to interpolate 
between \eqref{eq-019A} and \eqref{PtI} (for $q_1=q$ near $1$ and  $q_2=\infty$).    
We obtain \eqref{EQ!TSE} at   points $D_1(\frac{1}{p_1},0)$  with $\frac1{p_1}<\frac{s}{n}$ which are near the point $D(\frac{s}{n},0)$. Similarly, interpolating between \eqref{eq-019B} and \eqref{PtI} ($q_1$ near 1, $q_2=2$) yields \eqref{EQ!TSE} at   points  $K_1(\frac1{p_1},\frac12)$  with $\frac1{p_1}<\frac{s}n$ 
 near   $K(\frac{s}n,\frac12)$. This 
yields \eqref{EQ!TSE} on $\Gamma_1$ and 
 completes  part (a).

(b) Assume $n<s\le \frac{3n}2$. Since $r\ge 2$, the Kato-Poince inequality \cite{GO} implies that
\begin{equation}
\label{eq-KP}
\sup_{j\in\mathbb{Z}}
  \|\sigma(2^j\cdot)\widehat{\Psi}\|_{L^2_{s}(\mathbb{R}^{2n})}
  \lesssim
  \sup_{j\in\mathbb{Z}}
  \|\sigma(2^j\cdot)\widehat{\Psi}\|_{L^r_{s}(\mathbb{R}^{2n})}.
\end{equation}
Combining   estimates \eqref{eq-KP} and \eqref{PtI} yields \eqref{EQ!TSE} 
in the open  pentagon $OIRSJ$ union the open segments $OI$ and $OJ$. 
 This completes the second part of Theorem \ref{main}.

(c) In the last case when $s>\frac{3n}{2}$, notice that 
 condition \eqref{0099LE} reduces to $p> \frac12$
and since 
\[
\sup_{j\in\mathbb{Z}}
  \|\sigma(2^j\cdot)\widehat{\Psi}\|_{L^r_{\frac{3n}{2}}(\mathbb{R}^{2n})}
  \le
\sup_{j\in\mathbb{Z}}
  \|\sigma(2^j\cdot)\widehat{\Psi}\|_{L^r_s(\mathbb{R}^{2n})} , 
\]
the case in part (b) applies and yields \eqref{EQ!TSE} for every point in 
the entire   rhombus $OITJ$ union the open segments $OI$ and $OJ$. 
The proof of Theorem~\ref{main} is now complete.
\end{proof}

\section{An application}

We consider the following multiplier on $\mathbb R^{2n}$:
$m_{a,b}(\xi_1,\xi_2) = \psi(\xi_1,\xi_2) |(\xi_1,\xi_2)|^{- b} e^{i|(\xi_1,\xi_2)|^a}$
where $a > 0$, $a \neq   1$, $b > 0$, and $\psi$ is a smooth function  on $\mathbb R^{2n}$ which vanishes in a
neighborhood of the origin and is equal to $1$ in a neighborhood of infinity. One can verify that $m_{a,b}$
satisfies \eqref{1H} on $\mathbb R^{2n}$ with $s = b/a$ and any $r>2n/s$. 

The range of $p$'s for which  $m_{a,b}$ is a bounded bilinear 
multiplier on $L^p(\mathbb R^{2n})$
can be completely described by the equation  $   |\f 1p-\f 12|\le \f{b/a}{2n} $ (see  Hirschman
 \cite[comments   after Theorem 3c]{hirschman2},    Wainger~\cite[Part II]{W}, and    Miyachi~\cite[Theorem 3]{Miy});   similar  examples 
 of multipliers of limited boundedness 
 are contained in Miyachi and Tomita
\cite[Section 7]{MT13}.

As a consequence of Theorem~\ref{main} we obtain that the bilinear multiplier operator
associated with $m_{a,b}$ is bounded from $L^{p_1}(\mathbb R^n)\times L^{p_2}(\mathbb R^n)$ to $L^{p }(\mathbb R^n)$ in the following cases:
\begin{enumerate}[(i)]
\item when $n\ge b/a> n/2$ and
\[
\frac{1}{p_1}<\frac{b}{an},\,
\frac{1}{p_2}<\frac{b}{an},\,
1-\frac{b}{an}<\frac{1}{p}<\frac{b}{an}+\frac12.
\]
\item when $3n/2\ge b/a> n $ and
$$
 \f{1}{p }  <\f {b}{an}+\f12\, ;
$$
\item when   $b/a >3n/2$ in the entire range   of exponents $1<p_1,p_2\le \infty$, 
$\frac12<p<\infty$.
\end{enumerate}

The boundedness of this specific
bilinear multiplier is   unknown to us outside the above range of indices.

%%% Begin BibTeX - Hanh %%%%%%%%%%%%%%
%%% These lines are used to build the bibliography %
%\bibliographystyle{amsplain}
%\bibliography{D:/MyProject/BibTeXLib/MyBIB}

\begin{thebibliography}{99}

\bibitem{CT77} A.~P. Calder{\'o}n and A.~Torchinsky, \emph{Parabolic maximal functions
   associated with a distribution, {II}}, Adv. in Math. \textbf{24} (1977),  101--171.

\bibitem{FT12}
M.~Fujita and N.~Tomita, \emph{Weighted norm inequalities for multilinear
  {Fourier} multipliers}, Trans. Amer. Math. Soc. \textbf{364} (2012),
  6335--6353.

\bibitem{Grafa14CFA}
L.~Grafakos, \emph{Classical Fourier Analysis}, 3rd ed., Graduate Texts in
  Mathematics, GTM 249, Springer-Verlag, New York, 2014.

\bibitem{Grafa14MFA}
L.~Grafakos, \emph{Modern Fourier Analysis}, 3rd ed., Graduate Texts in
  Mathematics, GTM 250, Springer-Verlag, New York, 2014.

 \bibitem{GHH2016I}
L.~Grafakos, D.~He, and P.~Honz{\'\i}k, H. V. Nguyen, \emph{The {H\"ormander} multiplier
  theorem {I}: The linear case},
  Illinois J. Math. \textbf{61} (2017),   25--35. 

\bibitem{GHH2016II}
L.~Grafakos, D.~He, and P.~Honz{\'\i}k, \emph{The {H\"ormander} multiplier
  theorem {II}: The local $L^2$  case}, 
   Math. Zeit. \textbf{289} (2018),  875--887. 

\bibitem{GMNT16}
L.~Grafakos, A.~Miyachi, H.~V. Nguyen, and N.~Tomita, \emph{Multilinear
  {Fourier} multipliers with minimal {Sobolev} regularity, {II}}, J. Math. Soc.
  Japan \textbf{69} (2017),   529--562.

\bibitem{GMT13}
L.~Grafakos, A.~Miyachi, and N.~Tomita, \emph{On multilinear {Fourier}
  multipliers of limited smoothness}, Can. J. Math. \textbf{65} (2013),   299--330.

\bibitem{GN16}
L.~Grafakos and H.~V. Nguyen, \emph{Multilinear {Fourier} multipliers with
  minimal {Sobolev} regularity, {I}}, Colloquium Math. \textbf{144} (2016),  1--30.

\bibitem{GO} L. Grafakos and S. Oh, \emph{The Kato-Ponce inequality}, Comm. in PDE \textbf{39} (2014),   1128--1157.

\bibitem{GS12}
L.~Grafakos and Z.  Si, \emph{The {H\"ormander} multiplier theorem for
  multilinear operators}, J. Reine Angew. Math. \textbf{668} (2012), 133--147.

%\bibitem{Hirsc52}
%I.~I. Jr Hirschman, \emph{A convexity theorem for certain groups of
%  transformations}, J. d' Analyse Math. \textbf{2} (1952),  209--218.
 
\bibitem{hirschman2} I. I. Jr. Hirschman,
\emph{On multiplier transformations},  Duke Math. J. {\bf 26} (1959), 221--242.

\bibitem{Hoe}  L. H\"ormander,
\emph{Estimates for translation invariant operators
in $L^p$ spaces,} Acta Math. {\bf 104} (1960), 93--139.

\bibitem{KP} T. Kato, G. Ponce, \emph{Commutator estimates and the Euler and Navier-Stokes equations}, Comm.
Pure App. Math. \textbf{41} (1988),  891--907.

\bibitem{Mihl}
S.~G. Mikhlin, \emph{On the multipliers of {Fourier} integrals}, Dokl. Akad.
  Nauk SSSR (N.S.) \textbf{109} (1956),   701--703.

\bibitem{Miy} A. Miyachi,   \emph{On some Fourier multipliers for $H^p(\mathbb R^n)$},  J. Fac. Sci. Univ. Tokyo
Sect. IA Math. {\bf 27} (1980),    157--179. %

\bibitem{MT13}
A.~Miyachi and N.~Tomita, \emph{Minimal smoothness conditions for bilinear
  {Fourier} multipliers}, Rev. Mat. Iberoamer. \textbf{29} (2013),   495--530.

\bibitem{MT14}
A.~Miyachi and N.~Tomita, \emph{Boundedness criterion for bilinear {Fourier} multiplier
  operators}, Tohoku Math. J. \textbf{66} (2014),   55--76.

%\bibitem{Nakai-Sawano-2014} Nakai E., Sawano Y., 
%\emph{Orlicz-Hardy spaces and their duals},
%Sci. China Math. {\bf 57} (2014), no. 5, 903--962.

\bibitem{slav} L. Slav\'\i kov\'a,
personal communication. 


\bibitem{SW57}
E.~M. Stein and G.~Weiss, \emph{On the interpolation of analytic families of
  operators acting on {$H^{p}$}-spaces}, Tohoku Math. J. \textbf{9} (1957),  318--339.

\bibitem{Tomit10}
N.~Tomita, \emph{A {H\"ormander} type multiplier theorem for multilinear
  operators}, J. Funct. Anal. \textbf{259} (2010),   2028--2044.

%\bibitem{Tomit12}
%N.~Tomita, \emph{A remark on multilinear {Fourier} multipliers satisfying {Besov}
%  estimates}, RIMS K{\^o}ky{\^u}roku Bessatsu \textbf{B33} (2012), 111--121.

  \bibitem{W} S. Wainger,  \emph{Special trigonometric series in k-dimensions},  Mem. Amer. Math. Soc.
{\bf 59} (1965), 1--102.


\end{thebibliography}
%\end{document}
%%% End BibTeX %%%%%%%%%%%%%%

\end{document}